\documentclass[12pt,reqno]{amsart}
\usepackage{graphicx}
\usepackage{amssymb,amsmath}
\usepackage{amsthm}
\usepackage{color}
\usepackage[pdf]{pstricks}
\usepackage{hyperref}
\usepackage{empheq}
\usepackage{pgfplots}

\setlength{\textwidth}{15.5cm}
\setlength{\textheight}{20.5cm}
\addtolength{\oddsidemargin}{-1cm}
\addtolength{\evensidemargin}{-1cm}

\parskip 3pt
\parindent=1.5em

\usepackage{amssymb}
\usepackage{epsfig}
\usepackage{extarrows}
\usepackage{amsfonts}
\usepackage{graphicx}
\usepackage{caption}
\usepackage{subcaption}
\usepackage{tikz}

\newcommand{\R}{\mathbb{R}}

\newcommand{\N}{\mathbb{N}}

\newcommand{\eps}{\varepsilon}

\newtheorem{remark}{Remark}

\newtheorem{theorem}{Theorem}
\newtheorem{corollary}{Corollary}
\newtheorem{lemma}{Lemma}

\begin{document}
	
\title[Long-wave approximation in cylindrical coordinates]{\bf On the long-wave approximation of solitary waves in cylindrical coordinates}

\author{James Hornick}
\address[J. Hornick]{Department of Mathematics and Statistics, McMaster University, Hamilton, Ontario, Canada, L8S 4K1}
	
\author{Dmitry E. Pelinovsky}
\address[D. E. Pelinovsky]{Department of Mathematics and Statistics, McMaster University, Hamilton, Ontario, Canada, L8S 4K1}
\email{dmpeli@math.mcmaster.ca}
	
\author{Guido Schneider}
\address[G. Schneider]{Institut f\"{u}r Analysis, Dynamik und Modellierung,
			Universit\"{a}t Stuttgart, Pfaffenwaldring 57, D-70569 Stuttgart, Germany }

\begin{abstract}
	We address justification and solitary wave solutions of the cylindrical KdV equation which is formally derived as a long wave approximation of radially symmetric waves in a two-dimensional nonlinear dispersive system. For a regularized Boussinesq equation, we prove error estimates between true solutions of this equation and the associated cylindrical KdV approximation in the $L^2$-based spaces. The justification result holds in the spatial dynamics formulation of the regularized Boussinesq equation. We also prove that the class of solitary wave solutions considered previously in the literature does not contain solutions in the $L^2$-based spaces. This presents a serious obstacle in the applicability of the cylindrical KdV equation for modeling of radially symmetric solitary waves since the long wave approximation has to be performed separately in different space-time regions.
\end{abstract}

\maketitle

\section{Introduction} 

Long radially symmetric waves in a two-dimensional nonlinear dispersive system can be modeled with the cylindrical Korteweg-de Vries (cKdV) equation. 
The cKdV equation has been derived in \cite{Iordansky,Johnson80,Maxon,Miles} by perturbation theory from the equations of the water wave problem in cylindrical coordinates to describe radially symmetric waves going to infinity.
See \cite{Horikis,Johnson03} for an overview about the occurrence of this and other amplitude equations for the shallow water wave problem.

Derivation of the cKdV equation is not straightforward compared to its analog in rectangular coordinates, the classical KdV equation, and it is still an active  area of research in physics \cite{Grimshaw19,Karima1,Karima2,Karima3}.  No mathematically rigorous results have been derived for the justification of the cKdV equation, compared to the rigorous approximation results available for the classical KdV equation after the pioneering works   \cite{Craig,KanoNishida,SchnICIAM,SchneiderWayne}.
{\em The main objective of this paper is to prove an approximation result 
for the cKdV equation and to discuss the validity of this approximation. }

Although we believe that our methods can be applied to every nonlinear dispersive wave system where the cKdV equation can be formally derived we restrict ourselves in the following to the system given by a regularized Boussinesq equation. The regularized Boussinesq equation in two spatial dimensions can be written in the normalized form as 
\begin{equation}
\label{constintro}
\partial_t^2 u - \Delta u + \sigma \partial_t^2 \Delta u 
= \Delta (u^2) ,
\end{equation}
with space variable $ x = (x_1,x_2) \in \R^2 $, time variable $ t \in \R $, 
Laplacian $ \Delta = \partial_{x_1}^2 + \partial_{x_2}^2 $,
and a smooth solution $ u = u(x,t) \in \R $. The normalized parameter $\sigma = \pm 1$ determines the dispersion relation of linear waves $u \sim e^{i k \cdot x - i \omega t}$ for $k = (k_1,k_2) \in \R^2$ in the form:
\begin{equation}
\label{dispersion}
\omega^2 = \frac{|k|^2}{1 - \sigma |k|^2}, \qquad k \in \R^2.
\end{equation}
It follows from the dispersion relation (\ref{dispersion}) and the standard analysis of well-posedness \cite{Bona1,Bona2} that the initial-value problem for (\ref{constintro}) with the initial data
\begin{equation}
\label{ivp-time}
u|_{t = 0} = u_0(x), \qquad \partial_{t} u |_{t = 0} = u_1(x), \qquad x \in \R^2,
\end{equation}
is locally well-posed in Sobolev spaces of sufficient regularity for $\sigma = -1$ and ill-posed for $\sigma = +1$.

\begin{remark}
To justify the cKdV equation, we shall use the spatial dynamics formulation with the radius $ r := \sqrt{x_1^2 + x_2^2} $ as evolutionary variable. It turns out that due to the dispersion term $\sigma \partial_t^2 \Delta u$ in (\ref{constintro}) the spatial dynamics formulation and the temporal dynamics formulation are not well posed simultaneously.	If the temporal dynamics formulation is well posed, the spatial dynamics formulation is ill posed and vice versa.
\end{remark}
 
The radial spatial dynamics formulation of the regularized Boussinesq equation (\ref{constintro}) is obtained by introducing 
the radial variable $ r = \sqrt{x_1^2+x_2^2} $ and rewriting  \eqref{constintro} for $u = u(r,t)$ with $ \Delta = \partial_r^2 + \frac{1}{r}\partial_r $ in the form:
\begin{equation}
\label{Bous-rad}
(\partial_r^2 + r^{-1} \partial_r) (u - \sigma \partial_t^2 u + u^2) = 
\partial_t^2 u .
\end{equation}
The associated spatial dynamics problem is given by 
\begin{equation}
\label{ivp-radial}
u|_{r = r_0} = u_0(t), \qquad \partial_{r} u |_{r = r_0} = u_1(t), \qquad t \in \R,
\end{equation}
for some $ r_0 > 0 $. It is clear that the spatial evolution of  (\ref{Bous-rad}) with ``initial data" in (\ref{ivp-radial})  is locally well-posed for $\sigma = 1$ and ill-posed for $\sigma = -1$, see Theorem \ref{thexist}. In Section \ref{newsec2} we derive the cKdV equation for long waves of the radial Boussinesq equation (\ref{Bous-rad}) in case $ \sigma = 1 $. 
The cKdV approximation is given by $u(r,t) = \eps^2 A (\eps^3 r, \eps (t-r))$ 
with $\eps$ being a small parameter and $A(\rho,\tau)$ satisfying the following cKdV equation
\begin{equation}
\label{cKdV-intro}
2 \partial_{\rho} A + \rho^{-1} A + \partial_{\tau}^3 A = \partial_{\tau} (A^2), 
\end{equation}
where $ \tau := \eps( t - r ) \in \mathbb{R} $ and $ \rho :=\eps^3 r \geq \rho_0 $ for some $\rho_0 > 0$  are rescaled versions of the variables $(r,t)$ in the traveling frame and $A(\rho,\tau) \in \mathbb{R}$ is the small-amplitude approximation for $u(r,t) \in \mathbb{R}$. We have to impose the spatial dynamics formulation for the cKdV equation (\ref{cKdV-intro}) with the initial data 
\begin{equation}
\label{cKdV-IV}
A|_{\rho = \rho_0} = A_0(\tau), \quad \tau \in \mathbb{R}.
\end{equation}
It follows from the contraction mapping principle applied to the KdV equation \cite{KPV93} and the boundedness of the linear term $\rho^{-1} A$ for $\rho \geq \rho_0 > 0$ that the initial-value problem for (\ref{cKdV-intro}) with ``initial data" in (\ref{cKdV-IV}) is locally well-posed for $A_0 \in H^s(\mathbb{R})$ with any $s > \frac{3}{4}$. Moreover, if $\int_{\mathbb{R}} A_0 d \tau = 0$, then 
\begin{equation}
\label{mean-value}
\int_{\mathbb{R}} A(\rho,\tau) d \tau = 0, \qquad \rho \geq \rho_0,
\end{equation}
which implies that the unique local solution of (\ref{cKdV-intro})--(\ref{cKdV-IV}) satisfies 
\begin{equation}
\label{class-solutions}
A \in C^0([\rho_0,\rho_1],H^{s}(\R)) \quad \mbox{\rm and} 
\quad \partial_{\tau}^{-1} A \in C^0([\rho_0,\rho_1],H^{s-2}(\R))
\end{equation}
for some $\rho_1 > \rho_0$ if $A_0 \in H^s(\R)$  and $\partial_{\tau}^{-1} A \in H^{s-2}(\R)$, see Lemma \ref{lemma-Bous}.

The main approximation result is given by the following theorem. 

\begin{theorem}
	\label{mainthapp}
	Fix $ s_A > \frac{17}{2} $,  $ \rho_1 > \rho_0 > 0 $,  and $ C_{1} > 0 $.   Then there exist $ \varepsilon_0 > 0 $ and $ C_0 > 0 $  such that 
	for all $ \varepsilon \in (0,\varepsilon_0) $
	the following holds. 
	Let $ A \in C^0([\rho_0,\rho_1],H^{s_A}(\mathbb{R})) $  be a  solution of the cKdV equation \eqref{cKdV-intro}
	with 
	$$ 
	\sup_{\rho \in [\rho_0,\rho_1]}(\| A(\rho,\cdot) \|_{H^{s_A}} + \|  \partial_{\tau}^{-1}  A(\rho,\cdot) \|_{H^{s_A-2}} ) \leq C_1 .
	$$ 
	Then there are solutions $ (u, \partial_r u) \in C^0([\rho_0 \varepsilon^{-3}, \rho_1 \varepsilon^{-3}],H^s(\R) \times H^s(\R)) $ of the radial Boussinesq equation \eqref{Bous-rad} with $s > \frac{1}{2}$ satisfying 
	$$ 
	\sup_{r \in [\rho_0 \varepsilon^{-3}, \rho_1 \varepsilon^{-3}]} 
	\sup_{t \in \R}| u(r,t) - \varepsilon^2 A(\varepsilon^3 r, \varepsilon(t-r)) | \leq C_0 \varepsilon^{7/2} .
	$$ 
\end{theorem}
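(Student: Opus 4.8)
My plan is to follow the by-now standard strategy for justifying amplitude equations (in the spirit of \cite{SchneiderWayne}), but carried out in the spatial-dynamics formulation: construct an improved approximation whose residual is of high order in $\varepsilon$, and then bound the difference between the true solution and this approximation by an energy estimate in the evolution variable $r$ over the long interval $[\rho_0\varepsilon^{-3},\rho_1\varepsilon^{-3}]$. The first step is to recast \eqref{Bous-rad} with $\sigma=1$ as a first-order-in-$r$ system and to understand its linearization by Fourier transform in $t$. Applying the bounded invertible multiplier $(1-\partial_t^2)^{-1}$ (symbol $1/(1+\omega^2)$) reduces the linear part to $\partial_r^2 R + r^{-1}\partial_r R + L R = 0$ with $L=-(1-\partial_t^2)^{-1}\partial_t^2$ of symbol $\omega^2/(1+\omega^2)\in[0,1)$, so that the spatial wavenumber $\mu(\omega)=|\omega|/\sqrt{1+\omega^2}$ is bounded. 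This boundedness at all frequencies is the analytic face of well-posedness in the $\sigma=1$ spatial direction (Theorem \ref{thexist}); it is absent in the temporal formulation and is exactly why the spatial-dynamics picture is the correct arena. I would record the resulting uniform $H^s_t$-bound for the linear propagator.

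Next I would build the improved approximation $\psi=\varepsilon^2 A+\varepsilon^4 A_1+\dots+\varepsilon^{2m}A_{m-1}$, choosing the corrections $A_j$ to cancel the successive lowest-order terms of the residual obtained by substituting $\psi$ into \eqref{Bous-rad}: at leading order this forces $A$ to solve the cKdV equation \eqref{cKdV-intro}, and each $A_j$ then solves a linear inhomogeneous cKdV-type equation driven by the previous orders (so all $A_j$ inherit the regularity and the mean-value structure of $A$, cf. Lemma \ref{lemma-Bous} and \eqref{mean-value}). Since everything is expressed through the traveling variable $\tau=\varepsilon(t-r)$, passing from the $L^2_\tau$-norms of the $A_j$ to the $L^2_t$-norms that enter the estimate costs a factor $\varepsilon^{-1/2}$ per norm; tracking these factors is routine but essential for the final power. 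Taking $m$ large enough makes the residual $\mathrm{Res}_\psi$ of order $\varepsilon^{\kappa}$ in $H^s_t$ with $\kappa$ as large as needed, at the price of additional $\tau$-derivatives of $A$. This derivative loss, accumulated over the corrections and over the energy estimate and combined with the Sobolev threshold $s>\tfrac12$ and the two-derivative offset from the $\partial_\tau^{-1}$-control in the hypotheses, is what fixes the regularity requirement $s_A>\tfrac{17}{2}$.

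The core is the energy estimate for $R=u-\psi$, which satisfies $(\partial_r^2+r^{-1}\partial_r)\bigl[(1-\partial_t^2)R+2\psi R+R^2\bigr]-\partial_t^2 R=-\mathrm{Res}_\psi$, equivalently $\partial_r^2 R+r^{-1}\partial_r R+LR=F$ after applying $(1-\partial_t^2)^{-1}$, where $F$ collects the residual and the nonlinear forcing. Using $E(r)=\tfrac12\|\partial_r R\|_{H^s_t}^2+\tfrac12\langle LR,R\rangle_{H^s_t}$, the linear part contributes $-r^{-1}\|\partial_r R\|_{H^s_t}^2\le 0$, so the cylindrical term actually acts as a favorable damping, and $\tfrac{d}{dr}E\lesssim E^{1/2}\bigl(\|\mathrm{Res}_\psi\|_{H^s_t}+\|F_{\mathrm{nl}}\|_{H^s_t}\bigr)$. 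The decisive point, and the step I expect to be the main obstacle, is to keep the Gronwall exponent $O(1)$ over an interval of length $\sim\varepsilon^{-3}$. The mechanism is a careful $\varepsilon$-count on the potential term $2\psi R$: since $\psi=O(\varepsilon^2)$, since $\partial_r\sim-\varepsilon\partial_\tau+\varepsilon^3\partial_\rho$ produces a power of $\varepsilon$ per $r$-derivative, and since the energy weight $L$ and $\partial_r R$ themselves carry $\varepsilon^2$ and $\varepsilon$ respectively on the $\tau$-concentrated spectrum, this term feeds into $\tfrac{d}{dr}E$ only at order $\varepsilon^3 E$, whence $\exp\!\bigl(C\varepsilon^3\cdot\varepsilon^{-3}\bigr)=O(1)$. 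A subtlety here is that $L$ degenerates at $\omega=0$, so $E$ does not control the zero mode directly; this low-frequency part I would close separately using the $\partial_\tau^{-1}$-bounds assumed in the statement together with the mean-value identity \eqref{mean-value}.

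Finally I would run a continuation/bootstrap argument: as long as $R$ stays $O(\varepsilon^4)$ in amplitude the nonlinearity $2\psi R+R^2$ is dominated by its linear part, the true solution persists on the whole interval via Theorem \ref{thexist}, and the energy bound propagates. Since the initial data are taken exactly on the approximation at $r=\rho_0\varepsilon^{-3}$, the Gronwall argument gives $E^{1/2}\lesssim\varepsilon^{\kappa-3}$, which with the energy weight translates into $\sup_{r}\|R(r,\cdot)\|_{H^s_t}=O(\varepsilon^{7/2})$ once $\kappa$ is chosen large enough. The discarded corrections $\varepsilon^{2j}A_j$ are $O(\varepsilon^4)$ in $L^\infty_t$, hence negligible at the target order, and a Sobolev embedding $H^s_t\hookrightarrow L^\infty_t$ with $s>\tfrac12$ converts the $H^s_t$-bound on $u-\psi$ into the pointwise estimate $\sup_{r,t}\,|u(r,t)-\varepsilon^2 A(\varepsilon^3 r,\varepsilon(t-r))|\le C_0\varepsilon^{7/2}$, which is the assertion.
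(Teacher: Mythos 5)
There is a genuine gap, and it sits exactly where you yourself flag ``the main obstacle'': the low-frequency/zero-mean part of the error. You work throughout in the original variable $u$, with error equation $(\partial_r^2+r^{-1}\partial_r)\bigl[(1-\partial_t^2)R+2\psi R+R^2\bigr]-\partial_t^2R=-\mathrm{Res}_\psi$, and propose two devices to handle $\omega\approx 0$: higher-order corrections $A_j$ that ``inherit the mean-value structure of $A$'', and a separate closure of the zero mode via the $\partial_\tau^{-1}$-bounds. Both fail in the $u$-formulation. The corrections cannot inherit zero mean: at the order where the residual terms $\partial_\rho^2(A^2)$ and $\rho^{-1}\partial_\rho(A^2)$ enter, eliminating them (or applying $\partial_t^{-1}=\varepsilon^{-1}\partial_\tau^{-1}$ to them) requires $\partial_\tau^{-1}$ of $\rho^{-2}A^2$-type terms, and $\int_\R A^2\,d\tau=0$ forces $A\equiv 0$; nor can you trade them for $\partial_\rho^2\partial_\tau^{-1}(A^2)$, because the cKdV flow does not conserve the $L^2$-norm. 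This is precisely Remark \ref{rem33} of the paper, and it caps your residual order $\kappa$ rather than letting it be ``as large as needed''. Likewise the zero mode cannot be ``closed separately'': integrating the $u$-equation in $t$ shows that only $\int(u+u^2)\,dt$, not $\int u\,dt$, is annihilated by $\partial_r^2+r^{-1}\partial_r$, so the mean of $R$ is genuinely forced by $\varepsilon^{-\beta}\int\psi^2\,dt$, which varies in $\rho$ (again non-conservation of the $L^2$-norm) and is not perturbative after the $\varepsilon^{-\beta}$ rescaling; the identity \eqref{mean-value} concerns $A$, not $u$ or $R$. The missing idea is the paper's preliminary nonlinear change of variables $v=u+u^2$, converting \eqref{const3} into \eqref{const5}: it propagates the vanishing mean along the spatial evolution, makes $\partial_t^{-1}\mathrm{Res}$ estimable (Lemma \ref{lem-res-anti}; after substituting the cKdV equation only the harmless zero-mean term $-(4\rho^2)^{-1}A$ survives without a $\partial_\tau$ in front), and legitimizes the multiplier $-\partial_r\partial_t^{-2}R$, which is exactly what puts $\|R\|_{L^2}$ and $\|\partial_r\partial_t^{-1}R\|_{L^2}$ into the energy. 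Your energy $\tfrac12\|\partial_r R\|_{H^s_t}^2+\tfrac12\langle LR,R\rangle_{H^s_t}$ never controls $\|R\|_{L^2}$ near $\omega=0$, so your final step $H^s_t\hookrightarrow L^\infty_t$ is unsupported as written.

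A secondary but real defect is your justification of the Gronwall rate $\varepsilon^3E$ for the potential term $2\psi R$: the claim that $L$ and $\partial_r R$ ``carry $\varepsilon^2$ and $\varepsilon$ on the $\tau$-concentrated spectrum'' presumes a spectral localization of the error $R$ that nothing in the scheme provides. The paper's mechanism is different and purely integration by parts in $r$: the $\mathcal{O}(\varepsilon^2)$ mixed terms are rewritten as exact $r$-derivatives of cubic integrals such as $\varepsilon^2\int A R^2\,dt$ and $\varepsilon^2\int A(\partial_r\partial_t R)^2\,dt$, which are absorbed into a modified energy $E=E_0+E_1$ equivalent to $E_0$ for small $\varepsilon$; the leftover pieces carry either a factor $r^{-1}=\varepsilon^3\rho^{-1}$ or an $r$- or $t$-derivative of $A$, each worth at least one extra power of $\varepsilon$ by the chain rule. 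With this in place no improved approximation is needed at all: the bare ansatz already gives $\|\mathrm{Res}\|_{H^s}=\mathcal{O}(\varepsilon^{15/2})$ and $\|\partial_t^{-1}\mathrm{Res}\|_{H^s}=\mathcal{O}(\varepsilon^{13/2})$, and the single choice $\beta=\tfrac72$ closes the argument; the threshold $s_A>\tfrac{17}{2}$ comes from the eight $\tau$-derivatives lost in the residual estimates combined with the Sobolev threshold $s>\tfrac12$, not from a hierarchy of corrections. Your remaining ingredients (the bounded multiplier $(1-\partial_t^2)^{-1}$, local existence via Theorem \ref{thexist}, the bootstrap, and the final Sobolev embedding) do match the paper, but the proposal does not go through without the change of variables and the $\partial_t^{-1}$-augmented energy.
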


\begin{remark}
The proof of Theorem \ref{mainthapp}  goes along the lines of the associated proof for validity of the KdV approximation in \cite{SchnICIAM,SchneiderWayne}. However, there are new difficulties which have to be overcome. The major point is 
that a vanishing mean value as in (\ref{mean-value}) is required for the solutions of the cKdV equation (\ref{cKdV-intro}), a property which  fortunately is preserved by the evolution of the cKdV equation.  Subsequently, a vanishing  mean value is also required for the solutions of the radial Boussinesq equation (\ref{Bous-rad}).  However, this property is not preserved in the spatial evolution of the radial Boussinesq equation (\ref{Bous-rad}). We use a nonlinear change of variables from $u(r,t)$ to $v(r,t)$ in Section \ref{newsec2} in order to preserve the vanishing mean value in the spatial evolution. 
\end{remark}

The cKdV equation (\ref{cKdV-intro}) admits exact solutions for solitary waves due to its integrability \cite{Calogero,Dryuma,NakamuraChen80}. These exact solutions have important physical applications \cite{Johnson90,Hersh,Step81,Weidman1,Weidman2},
which have continued to stimulate recent research \cite{Gal,Step1,Step2}. 
It was observed that parameters of the exact solutions of the cKdV equation agree well with the experimental and numerical simulations of solitary waves. 
However, the solitary wave solutions of the cKdV equation do not decay sufficiently well at infinity \cite{Johnson99} and hence it is questionable how such solutions can be described in the radial spatial dynamics 
of the Boussinesq equation in the mathematically rigorous sense.

We address the solitary wave solutions of the cKdV equation (\ref{cKdV-intro}) in Section \ref{newsec3}, where we will use the theory of Airy functions and give a more complete characterization of the solitary wave solutions compared to previous similar results, e.g. in Appendix A of \cite{Johnson99}. The following theorem presents the corresponding result.

\begin{theorem}
	\label{th-soliton}
Consider solutions of the cKdV equation (\ref{cKdV-intro}) in the class of solitary waves given by 
\begin{equation}
\label{A-Hirota}
	A(\rho,\tau) = -6 \partial_{\tau}^2 \log \left[ 1 + \frac{1}{(6\rho)^{1/3}} F\left( \frac{\tau}{(6 \rho)^{1/3}} \right) \right],
\end{equation}
with some $F \in C^{\infty}(\mathbb{R},\R)$. All bounded solutions in the form \eqref{A-Hirota}  satisfy the decay condition 
$$
A(\rho,\tau) \to 0 \quad \mbox{\rm as } \;\; |\tau| \to \infty
$$ 
and the zero-mean constraint 
$$
\int_{\mathbb{R}} A(\rho,\tau) d \tau = 0
$$
but fail to be square integrable, that is, $A(\rho,\cdot) \notin L^2(\R)$ for every $\rho > 0$. 
\end{theorem}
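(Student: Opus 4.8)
The plan is to substitute \eqref{A-Hirota} into \eqref{cKdV-intro}, reduce the problem to an Airy-type ODE for $F$, and then read the three properties off the asymptotics of $F$ at $\pm\infty$. I first pass to the self-similar variable $\xi=\tau/(6\rho)^{1/3}$ and set $c=(6\rho)^{1/3}$, so that $\partial_\tau=c^{-1}\partial_\xi$ and $\rho^{-1}=6c^{-3}$. With $f=1+c^{-1}F(\xi)$ and $A=-6\partial_\tau^2\log f$, equation \eqref{cKdV-intro} is equivalent to its Hirota bilinear form, which is exactly quadratic in $f$. Its part linear in $\phi:=f-1$ forces, for solutions vanishing as $|\tau|\to\infty$, the relation $2\phi_\rho+\rho^{-1}\phi+\phi_{\tau\tau\tau}=0$, which collapses to
\[
F'''-4\xi F'+2F=0 .
\]
The remaining quadratic part, after use of this ODE, is proportional to the conserved quantity $I:=(F'')^2-4\xi(F')^2+4FF'$, so its vanishing is equivalent to $I\equiv 0$. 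Thus $A$ of the form \eqref{A-Hirota} solves \eqref{cKdV-intro} precisely when $F$ solves the ODE and $I\equiv 0$. The ODE is of Airy type, with the contour representation $F(\xi)=\int_\Gamma s^{-3/2}\exp(\xi s-\tfrac{1}{12}s^3)\,ds$, which is the source of the Airy asymptotics used below.

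Next I would determine the fundamental asymptotics by steepest descent. As $\xi\to+\infty$ the solutions behave as $e^{\pm\frac43\xi^{3/2}}$ (times algebraic prefactors) and as $\xi^{1/2}$; as $\xi\to-\infty$ they behave as $|\xi|^{1/2}$ and as $|\xi|^{-1}e^{\pm i\frac43|\xi|^{3/2}}$. In the basis adapted to $\xi\to-\infty$ the conserved form $I$ is indefinite, with one negative direction carried by the algebraic mode and two positive directions carried by the oscillatory modes, so $I=0$ cuts out a nontrivial cone and forces any nonzero solution to contain both the algebraic and the oscillatory modes with linked amplitudes. Positivity of $f$, which is necessary for $A$ to be bounded, then forces the coefficient of the algebraic mode to be positive, so that $f\to+\infty$ and the algebraic mode dominates as $\xi\to-\infty$; in particular a slowly decaying oscillatory tail is unavoidable. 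This is the analytic mechanism behind the well-known non-decaying tail of cylindrical solitons.

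The three claims then follow from $A=-6c^{-2}\partial_\xi^2\log f$. Because $f$ is dominated by the algebraic background $1+c^{-1}a_{\mathrm{alg}}|\xi|^{1/2}$ as $\xi\to-\infty$, the subdominant $O(|\xi|^{-1})$ oscillation of $F$ passes through the logarithm and, after two derivatives, yields
\[
A(\rho,\tau)\sim \gamma\,|\tau|^{-1/2}\cos\!\left(\tfrac43|\xi|^{3/2}+\theta\right),\qquad \gamma\neq 0,
\]
as $\xi\to-\infty$, while $A\sim\delta\,\xi^{-2}$ as $\xi\to+\infty$; both tend to $0$, giving the decay claim. The zero-mean constraint follows from $\int_{\R}A\,d\tau=-6\,[\partial_\tau\log f]_{\tau=-\infty}^{\tau=+\infty}$ together with $\partial_\tau\log f=c^{-2}F'/(1+c^{-1}F)\to 0$ at both ends. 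Finally the $\xi\to-\infty$ tail gives $\int A^2\,d\tau\sim\int|\tau|^{-1}\,d\tau=\infty$, so $A(\rho,\cdot)\notin L^2(\R)$, even though the oscillation makes $\int_{\R}A\,d\tau$ only conditionally convergent.

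The main obstacle is the third step. One must show rigorously that the $O(|\xi|^{-1})$ oscillatory component of $F$, riding on the growing algebraic background, produces precisely an $|\tau|^{-1/2}$ oscillatory — hence non-$L^2$ but conditionally integrable — tail in $A$ after the two $\tau$-derivatives of $\log f$; this requires uniform control of the Airy/steepest-descent expansions and careful tracking of the cancellations inside $\log f$. One must also verify that these conclusions hold for every bounded member of the family, in particular handling the configurations that carry a growing exponential at $\xi\to+\infty$, for which the boundary term in the mean and the positivity of $f$ have to be examined separately.
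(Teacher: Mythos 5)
Your overall strategy coincides with the paper's: pass to the Hirota bilinear form, separate the self-similar substitution into a linear Airy-type ODE for $F$ plus the quadratic first integral $[F'']^2-4z[F']^2+4FF'=\mathrm{const}$, then use Airy asymptotics, positivity of $f$, a boundary-term computation for the zero mean, and the $|\tau|^{-1/2}\cos\left(\frac43|z|^{3/2}\right)$ tail as $\tau\to-\infty$ to rule out square integrability. Your description of the tail mechanism is exactly that of Lemma \ref{lemma-3}: the $O(1)$ oscillation of $F''$ divided by the algebraically growing background $F(z)\sim\frac{\alpha}{\pi}\sqrt{|z|}$, with $\alpha$ cancelling in the ratio $F''/f$ to produce the universal amplitude $-\sqrt{6}\,(\rho|\tau|)^{-1/2}$.

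However, the steps you defer are precisely where the paper does its real work, and your sketch has genuine gaps there. The paper bypasses steepest descent and your unproven ``signature of $I$ on asymptotic modes'' argument by quoting the closed-form solution of the linear ODE: $-F'(z)=\alpha\,{\rm Ai}(z)^2+\beta\,{\rm Bi}(z)^2+\gamma\,{\rm Ai}(z){\rm Bi}(z)$ (Abramowitz--Stegun 10.4.57). The quadratic constraint is then evaluated \emph{exactly}, giving $(\gamma^2-4\alpha\beta)({\rm Ai}\,{\rm Bi}'-{\rm Ai}'{\rm Bi})^2=0$, hence $\gamma=\pm2\sqrt{\alpha\beta}$ (Lemma \ref{lemma-1}); and your deferred point about configurations with a growing exponential at $+\infty$ is exactly Lemma \ref{lemma-2}: if $\beta\neq0$ then $F\to-{\rm sgn}(\beta)\infty$ as $z\to+\infty$ while $F\to{\rm sgn}(\beta)\infty$ as $z\to-\infty$, so positivity of $f$ fails; this kills $\beta$, hence $\gamma$, leaving only the one-parameter family $F(z)=\alpha\int_z^\infty{\rm Ai}(z')^2\,dz'$ with $\alpha\geq0$, for which all asymptotics are standard and your ``main obstacle'' disappears. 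In addition, two statements in your sketch are incorrect. First, you cannot fix the integration constant in $F'''-4\xi F'+2F=0$ by assuming ``solutions vanishing as $|\tau|\to\infty$'': for every nontrivial bounded solution $\phi=c^{-1}F$ grows like $|\xi|^{1/2}$ as $\xi\to-\infty$; the paper instead obtains $C=D=0$ by comparing the two first integrals \eqref{ode-4} and \eqref{ode-5}. Second, your claim $A\sim\delta\,\xi^{-2}$ as $\xi\to+\infty$ is wrong for the admissible family: since $\gamma=0$, the $\sqrt{z}$ mode is absent at $+\infty$, $F$ decays exponentially, and $A(\rho,\tau)\sim-\frac{\alpha}{2\pi\rho}e^{-\frac43z^{3/2}}$ as in \eqref{decay-1}; an algebraic $\xi^{-2}$ tail would require a $\gamma\neq0$ component incompatible with $I\equiv0$ once $\beta=0$. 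These errors do not alter the three conclusions, but they show that the reduction to the admissible one-parameter family --- the heart of the theorem --- is not actually carried out in your argument.
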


\begin{remark}
	The result of Theorem \ref{th-soliton} is due to the slow decay of solitary wave solutions (\ref{A-Hirota}) with 
	$$
	A(\rho,\tau) \sim |\tau|^{-1/2} \quad \mbox{\rm as} \quad \tau \to -\infty.
	$$ 
	We note that such solitary wave solutions satisfy 
	$A \in C^0((0,\infty),\dot{H}^s(\R))$ for any $s > 0$ but we are not aware of the local well-posedness for the cKdV equation (\ref{cKdV-intro}) in $\dot{H}^s(\R)$ with $s > 0$. Consequently, the justification result of Theorem \ref{mainthapp} does not apply to the solitary waves of the cKdV equation (\ref{cKdV-intro}) and one needs to use matching techniques in different space-time regions in order to consider radial solitary waves diverging from the origin, cf. \cite{Johnson80,Johnson90,Johnson99}.
\end{remark}

Similar questions arise for the long azimuthal perturbations of  the long radial waves. A cylindrical Kadomtsev-Petviashvili (cKP) equation was also proposed as a relevant model in \cite{Johnson80,Johnson03}. Motivated from physics of fluids and plasmas, problems of transverse stability of ring solitons were studied recently in \cite{Kresh,Step2,Step3}. Other applications of the KP approximation are interesting in the context of dynamics of square two-dimensional lattices based on the models of the Fermi-Pasta-Ulam type \cite{GP21,HP22,PS23}. Radially propagating waves with azimuthal perturbations are natural objects in lattices, see, e.g., \cite{Mac23,ST18}, and clarification of the justification of 
the cKdV equation is a natural first step before justification of the cKP equation in nonlinear two-dimensional lattices. We discuss further implication of the results of Theorems \ref{mainthapp} and \ref{th-soliton} for the cKdV and cKP equations in Section \ref{newsec4}.

\medskip

{\bf Notation.} Throughout this paper different constants are denoted with the same symbol $ C $ if they can be chosen independently of the small parameter
$ 0 < \varepsilon \ll 1 $. The Sobolev space $ H^s(\R) $, $s \in \N$ of  $ s $-times weakly differentiable functions is equipped with the norm 
$$ 
\| u \|_{H^s} = \left( \sum_{j= 0}^{s} \int_{\R} |\partial_x^j u(x)|^2 \mathrm{d}x\right)^{1/2} .
$$ 
The weighted Lebesgue space $ L^2_s(\R) $, $s \in \R$  is equipped with the norm 
$$ 
\| \widehat{u} \|_{L^2_s} = \left( \int_{\R} |\widehat{u}(k)|^2 (1+k^2)^s \mathrm{d}k\right)^{1/2} .
$$
Fourier transform is an isomorphism between $ H^s(\R) $ and $ L^2_s(\R) $ which allows us to extend the definition of $H^s(\R)$ to all values of $s \in \R$. 
\medskip

{\bf Acknowledgement.} The work of G. Schneider was partially  supported by the Deutsche Forschungsgemeinschaft (DFG, German
Research Foundation) - Project-ID 258734477 - SFB 1173. 
D. E. Pelinovsky acknowledges the funding of this study provided by the grant No. FSWE-2023-0004 
and grant No. NSH-70.2022.1.5. 

\section{Justification of the cKdV equation}
\label{newsec2}

Here we prove Theorem \ref{mainthapp} which states the approximation result for the cKdV equation. The plan is as follows.
In Section \ref{sec2} we derive the cKdV equation  (\ref{cKdV-intro}) for the radial Boussinesq equation \eqref{Bous-rad} in case $ \sigma = 1 $.
In Section \ref{sec3} we estimate the residual terms, i.e., the terms which remain after inserting the cKdV approximation into the radial Boussinesq equation. In Section \ref{sec8} we prove a local existence and uniqueness result for the radial spatial dynamics formulation. In Section \ref{sec4}--\ref{sec4b} we estimate the error made by this formal approximation in the radial spatial dynamics by establishing $ L^2 $- and $ H^1 $-energy estimates. The argument is completed in Section \ref{sec6} by using the energy to control the approximation error and by applying Gronwall's inequality.

\subsection{Derivation of the cKdV equation}
\label{sec2}

We rewrite the radial Boussinesq equation (\ref{Bous-rad}) with $ \sigma = 1 $ as 
\begin{equation} 
\label{const1}
(\partial_r^2 + r^{-1} \partial_r) (u - \partial_t^2 u + u^2) = 
\partial_t^2 u .
\end{equation}
The cKdV approximation can be derived if $ r \geq r_0 > 0 $ is considered as the evolutionary variable with the initial data (\ref{ivp-radial}). 
However, this evolutionary system has the disadvantage that 
$ \int_{\mathbb{R}} u(r,t) dt $ is not preserved in $r$,  see Remarks \ref{rem2neu} and \ref{rem33}. In order to overpass this technical difficulty, we rewrite \eqref{const1} as 
\begin{equation} 
\label{const3}
\partial_{t}^2 ( 1 + \partial_r^2 + r^{-1} \partial_r)  u = (\partial_r^2 + r^{-1} \partial_r) (u + u^2) 
\end{equation}
and make the change of variables $ v := u + u^2 $. For small $ v $ this 
quadratic equation admits a unique solution for small $u$ given by 
$$ 
u = v - v^2 + N(v) 
$$ 
with analytic $ N(v) = \mathcal{O}(v^3) $. In variable $v$, the radial spatial evolution problem is 
\begin{equation} 
\label{const5}
(\partial_r^2 + r^{-1} \partial_r) v = \partial_{t}^2 ( 1 + \partial_r^2 + r^{-1} \partial_r)  (v - v^2 + N(v)) .
\end{equation}
The local existence and uniqueness of solutions of the initial-value problem 
\begin{equation*}
\left\{ \begin{array}{l} 
(\partial_r^2 + r^{-1} \partial_r) v = \partial_{t}^2 ( 1 + \partial_r^2 + r^{-1} \partial_r)  (v - v^2 + N(v)), \quad r > r_0, \\
v |_{r = r_0} = v_0, \quad \partial_r v |_{r = r_0} = v_1 \end{array} \right.
\end{equation*}
can be shown for $ (v_0,v_1) \in H^s(\R) \times H^s(\R) $ for every $ s > \frac{1}{2} $,  see Theorem \ref{thexist}.

We make the usual ansatz for the derivation of the KdV equation, namely 
\begin{equation} \label{2kdvansatz}
\varepsilon^2 \psi_{\rm cKdV}(r,t) := \eps^2 A(\eps^3 r,\eps( t - c r )) ,
\end{equation}
with $ \tau := \eps( t - c r ) $ and $ \rho := \eps^3 r $, where $c$ is the wave speed. Defining the residual
\begin{equation} \label{const4}
\textrm{Res}(v) := - (\partial_r^2 + r^{-1} \partial_r) v+ \partial_{t}^2 ( 1 + \partial_r^2 + r^{-1} \partial_r)  (v - v^2 + N(v)) 
\end{equation}
we find
\begin{align*}
\textrm{Res}(\varepsilon^2 \psi_{\rm cKdV}) & = - c^2 \eps^4 \partial_{\tau}^2 A + 2 c \eps^6 \partial_{\rho} \partial_{\tau} A - \eps^8 \partial_{\rho}^2 A \\ 
& \quad + c \eps^6 \rho^{-1} \partial_{\tau} A - \eps^8 \rho^{-1} \partial_{\rho} A \\
& \quad+ \eps^4 \partial_{\tau}^2 A + c^2 \eps^6 \partial_{\tau}^4 A - 2 c \eps^8 \partial_{\rho} \partial_{\tau}^3 A + \eps^{10} \partial_{\rho}^2 \partial_{\tau}^2 A \\ 
& \quad- c \eps^8 \rho^{-1}\partial_{\tau}^3 A + \eps^{10} \rho^{-1}\partial_{\rho} \partial_{\tau}^2 A 
\\ & \quad - \eps^6 \partial_{\tau}^2 (A^2) -  c^2 \eps^8 \partial_{\tau}^4 (A^2)+ 2 c \eps^{10} \partial_{\rho} \partial_{\tau}^3  (A^2) - \eps^{12} \partial_{\tau}^2 \partial_{\rho}^2  (A^2) \\ 
& \quad+ c \eps^{10} \rho^{-1}\partial_{\tau}^3  (A^2) - \eps^{12} \rho^{-1}\partial_{\rho} \partial_{\tau}^2 (A^2) \\ 
& \quad
+ \varepsilon^2 \partial_{\tau}^2 ( 1 + (- c \varepsilon \partial_{\tau} + \varepsilon^3 \partial_{\rho})^2 +\varepsilon^3  \rho^{-1} (- c \varepsilon \partial_{\tau} + \varepsilon^3 \partial_{\rho})) N(\varepsilon^2 A)
\end{align*}
where the last line is at least of order $ \mathcal{O}(\varepsilon^8) $. 
We eliminate the terms of $ \mathcal{O}(\varepsilon^4) $ by choosing $ c^2 = 1 $. The radial waves diverge from the origin if $c = 1$ and converge towards the origin if $c = -1$. It makes sense to consider only outgoing radial waves, so that we set $c = 1$ in the following. 

With $c = 1$, the terms of $ \mathcal{O}(\varepsilon^6) $ are eliminated in $\textrm{Res}(\varepsilon^2 \psi_{\rm cKdV})$ by choosing $ A $ to satisfy the cKdV equation (\ref{cKdV-intro}) rewritten here as 
\begin{equation} 
\label{cKdV}
2 \partial_{\rho}  A  + \rho^{-1} A + \partial_{\tau}^3 A
- \partial_{\tau} (A^2) = 0 .
\end{equation}
By this choice we formally have
$$ 
\textrm{Res}(\varepsilon^2 \psi_{\rm cKdV}) =  \mathcal{O}(\varepsilon^8) .
$$ 
We will estimate the residual terms rigorously in Section \ref{sec3}.

\begin{remark} \label{rem2neu}
In our subsequent  error estimates $ \partial_t^{-1} $ has to be applied 
to $\textrm{Res}(v)$ in (\ref{const4}). However,  this is only possible if the nonlinear change of variables $ v = u + u^2 $ is applied.  This change of variables also allows us to  use the variable $ \partial_t^{-1} \partial_r u $ which played 
a fundamental role in the justification of the KdV equation in \cite{SchnICIAM,SchneiderWayne} and which is necessary to obtain 
an $ L^2 $-bound for the approximation error.
\end{remark}

\subsection{Estimates for the residual}
\label{sec3}

For estimating the residual $\textrm{Res}(\varepsilon^2 \psi_{\rm cKdV})$ we consider a solution $ A \in C([\rho_0,\rho_1],H^{s_A}(\R,\R)) $
of the cKdV equation \eqref{cKdV} with some $ s_A \geq 0 $ suitably chosen below.
Let 
\begin{equation}
\label{assAsolu}
C_A := \sup_{\rho \in [\rho_0,\rho_1]} \|A(\rho,\cdot) \|_{H^{s_A}} < \infty.
\end{equation}
With $c = 1$ and $A$ satisfying the cKdV equation (\ref{cKdV}), the residual is rewritten as 
\begin{align*}
\textrm{Res}(\varepsilon^2 \psi_{\rm cKdV}) & = - \eps^8 \partial_{\rho}^2 A - \eps^8 \rho^{-1} \partial_{\rho} A - 2 \eps^8 \partial_{\rho} \partial_{\tau}^3 A + \eps^{10} \partial_{\rho}^2 \partial_{\tau}^2 A \\ 
& \quad - \eps^8 \rho^{-1}\partial_{\tau}^3 A + \eps^{10} \rho^{-1}\partial_{\rho} \partial_{\tau}^2 A -  \eps^8 \partial_{\tau}^4 (A^2) +  2 \eps^{10} \partial_{\rho} \partial_{\tau}^3  (A^2)  \\ 
& \quad - \eps^{12} \partial_{\tau}^2 \partial_{\rho}^2  (A^2) + \eps^{10} \rho^{-1}\partial_{\tau}^3  (A^2) - \eps^{12} \rho^{-1}\partial_{\rho} \partial_{\tau}^2 (A^2) \\ 
& \quad
+ \varepsilon^2 \partial_{\tau}^2 ( 1 + (- \varepsilon \partial_{\tau} + \varepsilon^3 \partial_{\rho})^2 +\varepsilon^3  \rho^{-1} (- \varepsilon \partial_{\tau} + \varepsilon^3 \partial_{\rho})) N(\varepsilon^2 A)
\end{align*}
We can express $ \rho $-derivatives of $ A $ by $ \tau $-derivatives of $ A $ through the right-hand side of the cKdV equation \eqref{cKdV}. Hence for replacing one $ \rho $-derivative we need three 
$ \tau $-derivatives. In this way, the term $ \eps^{10} \partial_{\rho}^2 \partial_{\tau}^2 A$  loses most derivatives, namely eight   $ \tau $-derivatives. Due to the scaling properties of the $ L^2 $-norm w.r.t. the scaling $ \tau = \varepsilon (t-r) $, we are loosing $ \eps^{-1/2} $ 
in the estimates, e.g., see \cite{SU17}. As a result of the standard analysis, we obtain the following lemma.

\begin{lemma}
Let $ s \geq 0 $. Assume \eqref{assAsolu} with $ s_A = s + 8 $ and $C_A > 0$. There exists a $ C_{\rm res} > 0 $ such that for all $ \varepsilon \in (0,1] $ we have
$$
\sup_{r \in [\rho_0 \varepsilon^{-3},\rho_1 \varepsilon^{-3}]}  \| {\rm Res}(\varepsilon^2 \psi_{\rm cKdV})(r,\cdot) \|_{H^s} \leq C_{\rm res} \varepsilon^{\frac{15}{2}}.
$$
\label{lem-res}
\end{lemma}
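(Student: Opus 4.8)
The plan is to bound each term in the explicit expansion of $\textrm{Res}(\varepsilon^2 \psi_{\rm cKdV})$ displayed above in the $H^s$-norm, uniformly in $r \in [\rho_0 \varepsilon^{-3}, \rho_1 \varepsilon^{-3}]$, and to verify that every term is of order at least $\varepsilon^{15/2}$. The key observation, already flagged in the text preceding the statement, is that a naive power count gives $\varepsilon^8$ for the leading residual terms, but the change of variables $\tau = \varepsilon(t-r)$ costs a factor $\varepsilon^{-1/2}$ when passing from the $H^s$-norm in the slow variable $\tau$ to the $H^s$-norm in the physical variable $t$. This is the origin of the exponent $\tfrac{15}{2} = 8 - \tfrac12$ rather than $8$.

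\emph{First step: the scaling lemma.} I would isolate, as a preliminary, the scaling identity for a profile depending on $\tau = \varepsilon(t-r)$ at fixed $r$. For $g(t) = G(\varepsilon(t-r))$ one computes $\partial_t^j g(t) = \varepsilon^j (\partial_\tau^j G)(\varepsilon(t-r))$, and the substitution $\tau = \varepsilon(t-r)$ in the integral $\int_\R |\partial_t^j g(t)|^2 \, dt$ produces a Jacobian factor $\varepsilon^{-1}$, hence $\| g \|_{H^s(\R)} \leq C \varepsilon^{-1/2} \| G \|_{H^s(\R)}$ for all $\varepsilon \in (0,1]$, where $C$ depends only on $s$. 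This is the single source of the $\varepsilon^{-1/2}$ loss, and applying it once to each residual term converts the leading $\varepsilon^8$ prefactor into $\varepsilon^{15/2}$.

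\emph{Second step: term-by-term bounds and derivative accounting.} The residual consists of two kinds of terms: those linear in $A$ (together with those arising from $A^2$) and those coming from the cubic-and-higher remainder $N(\varepsilon^2 A)$. For the polynomial terms I would apply the scaling lemma directly: the term carrying the smallest explicit power of $\varepsilon$ is $\varepsilon^8 \partial_\rho^2 A$, which after scaling is $\mathcal{O}(\varepsilon^{15/2})$ provided $\partial_\rho^2 A(\rho,\cdot)$ is controlled in $H^s$. To bound $\partial_\rho A$ and $\partial_\rho^2 A$ by $\tau$-derivatives of $A$ I would use the cKdV equation \eqref{cKdV} itself: solving for $\partial_\rho A$ trades one $\rho$-derivative for three $\tau$-derivatives (plus the lower-order term $\rho^{-1} A$, which is harmless since $\rho \geq \rho_0 > 0$ and the nonlinear term $\partial_\tau(A^2)$, controlled by the algebra property of $H^s(\R)$ for $s > \tfrac12$). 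Iterating once more accounts for $\partial_\rho^2 A$. Tracking the worst case — the term $\varepsilon^{10} \partial_\rho^2 \partial_\tau^2 A$, which after eliminating both $\rho$-derivatives loses up to eight $\tau$-derivatives — is precisely what forces the requirement $s_A = s + 8$ in the hypothesis \eqref{assAsolu}; all of the resulting expressions are then bounded in $H^s$ by a constant depending on $C_A$. Since these terms carry at least $\varepsilon^{10}$, after the $\varepsilon^{-1/2}$ scaling loss they are $\mathcal{O}(\varepsilon^{19/2})$, well below the claimed order.

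\emph{Third step: the nonlinear remainder.} For the contribution of $N(\varepsilon^2 A)$ I would use that $N$ is analytic with $N(v) = \mathcal{O}(v^3)$, so $N(\varepsilon^2 A) = \mathcal{O}(\varepsilon^6)$ in $H^s$ by the composition/algebra estimates in $H^s(\R)$ for $s > \tfrac12$. The outer operator $\varepsilon^2 \partial_\tau^2(1 + (-\varepsilon\partial_\tau + \varepsilon^3 \partial_\rho)^2 + \varepsilon^3 \rho^{-1}(-\varepsilon\partial_\tau + \varepsilon^3 \partial_\rho))$ contributes a further $\varepsilon^2$, giving $\mathcal{O}(\varepsilon^8)$ before scaling and $\mathcal{O}(\varepsilon^{15/2})$ after, consistent with the text's remark that this line is at least of order $\mathcal{O}(\varepsilon^8)$. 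Collecting the bounds, the dominant contribution is exactly $\varepsilon^{15/2}$ and setting $C_{\rm res}$ equal to the sum of the finitely many constants completes the proof. \textbf{The main obstacle} I anticipate is the bookkeeping of the derivative loss: one must verify that eliminating $\rho$-derivatives via \eqref{cKdV} never demands more than $s + 8$ $\tau$-derivatives of $A$ and never produces a genuinely new negative power of $\varepsilon$, so that the $H^{s+8}$-control of $A$ in \eqref{assAsolu} together with the single $\varepsilon^{-1/2}$ scaling loss suffices uniformly on the long interval $r \in [\rho_0 \varepsilon^{-3}, \rho_1 \varepsilon^{-3}]$.
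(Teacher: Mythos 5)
Your proposal is correct and follows essentially the same route as the paper, which sketches exactly this argument: substitute the cKdV equation to reduce the residual to the $\mathcal{O}(\varepsilon^8)$ terms, trade each $\rho$-derivative for three $\tau$-derivatives via \eqref{cKdV} (with the worst term $\varepsilon^{10}\partial_\rho^2\partial_\tau^2 A$ costing eight $\tau$-derivatives, whence $s_A = s+8$), and absorb the single $\varepsilon^{-1/2}$ loss from the scaling $\tau = \varepsilon(t-r)$ of the $L^2$-norm, yielding $\varepsilon^{15/2}$. Your treatment of the analytic remainder $N(\varepsilon^2 A)$ via the algebra property of $H^s(\R)$ for $s>\frac12$ also matches the paper's accounting of that line as $\mathcal{O}(\varepsilon^8)$ before scaling.
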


In the subsequent error estimates we also need estimates for 
$ \partial_t^{-1} $  applied to $\textrm{Res}(\varepsilon^2 \psi_{\rm cKdV})$. The only terms in the residual which have no $ \partial_t $ in front are the ones collected in 
$$ 
 \eps^8  R_1 = - \eps^8 \partial_{\rho}^2 A  - \eps^8 \rho^{-1} \partial_{\rho} A .
$$ 
When $ \partial_{\rho} A $ is replaced by the right-hand side of the cKdV equation (\ref{cKdV}), we find
\begin{align*}
R_1 & =  \frac{1}{2} (\partial_{\rho} + \rho^{-1}) 
( \rho^{-1} A + \partial_{\tau}^3 A -  \partial_{\tau} A^2 )  \\
&=  \frac{1}{2} \partial_{\tau} (\partial_{\rho} + \rho^{-1}) 
(  \partial_{\tau}^2 A - A^2 ) + \frac{1}{2} \rho^{-1} \partial_{\rho} A \\
&=  \frac{1}{4} \partial_{\tau} (2 \partial_{\rho} + \rho^{-1}) 
(  \partial_{\tau}^2 A - A^2 ) - \frac{1}{4} \rho^{-2} A.
\end{align*}
Therefore, all terms in the residual can be written as derivatives in $\tau$ except of the term   $  - (4 \rho^2)^{-1} A $.
The operator $ \partial_{\tau}^{-1} $, respectively a multiplication with $ \frac{1}{ik} $ in the Fourier space, can be applied 
to $ - (4 \rho^2)^{-1} A $ only if $ A $  has a vanishing mean value and its Fourier transform decays as $ \mathcal{O}(|k|) $ for $ k \to 0 $. This is why we enforce 
the vanishing mean value as in (\ref{mean-value}) and consider 
solutions of the cKdV equation in the class of functions (\ref{class-solutions}). Such solutions are given by the following lemma.

\begin{lemma}
Fix $ s_A > \frac{3}{4} $, $ \rho_0 > 0 $, and pick $A_0 \in H^{s_A}(\R)$ such that 
$ \partial_{\tau}^{-1}  A_0  \in H^{s_A-2}(\R)$. There exist $ C > 0 $ and $ \rho_1 > \rho_0 $ such that the cKdV equation \eqref{cKdV} possesses 
a unique solution $ A \in C^0([\rho_0,\rho_1],H^{s_A}(\R)) $ with $A|_{\rho = \rho_0} = A_0$ satisfying 
$$ 
 \sup_{\rho \in [\rho_0,\rho_1]}(\| A(\rho,\cdot) \|_{H^{s_A}} + \|  \partial_{\tau}^{-1}  A(\rho,\cdot) \|_{H^{s_A-2}} ) \leq C.
$$ 
\label{lemma-Bous}
\end{lemma}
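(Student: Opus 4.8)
The plan is to prove Lemma~\ref{lemma-Bous} by a fixed-point/contraction argument applied to the cKdV equation \eqref{cKdV} written in integral (Duhamel) form, treating both the solution $A$ and its antiderivative $B := \partial_\tau^{-1} A$ as coupled quantities on a suitable product space. First I would rewrite \eqref{cKdV} in the form
\begin{equation*}
2 \partial_\rho A = - \rho^{-1} A - \partial_\tau^3 A + \partial_\tau(A^2),
\end{equation*}
and recall from \cite{KPV93} that the linear Airy propagator $e^{-\frac{1}{2}(\rho - \rho_0)\partial_\tau^3}$ together with the standard bilinear estimates for $\partial_\tau(A^2)$ gives local well-posedness in $H^{s_A}(\R)$ for any $s_A > \frac{3}{4}$. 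The extra zeroth-order term $\rho^{-1} A$ is a bounded linear perturbation for $\rho \geq \rho_0 > 0$ (with operator norm at most $\rho_0^{-1}$), so it is harmless in the contraction and only affects constants, as anticipated in the discussion preceding \eqref{mean-value}. This yields the existence of a unique $A \in C^0([\rho_0,\rho_1],H^{s_A}(\R))$ on a possibly short interval $[\rho_0,\rho_1]$ with the stated $H^{s_A}$-bound.

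The new ingredient is the propagation of the antiderivative bound $\|\partial_\tau^{-1} A\|_{H^{s_A-2}}$. I would derive a closed equation for $B = \partial_\tau^{-1} A$ by applying $\partial_\tau^{-1}$ to the cKdV equation, obtaining
\begin{equation*}
2 \partial_\rho B = - \rho^{-1} B - \partial_\tau^3 B + A^2,
\end{equation*}
where I have used $\partial_\tau^{-1}\partial_\tau(A^2) = A^2$ (valid because $A^2$ itself need not have zero mean, but the equation for $B$ never requires inverting $\partial_\tau$ on $A^2$). The crucial observation, already flagged in the text around \eqref{mean-value}, is that the zero-mean property $\int_\R A\, d\tau = 0$ is preserved by the flow: integrating \eqref{cKdV} over $\tau \in \R$, every $\partial_\tau$-derivative term integrates to zero, leaving $2\partial_\rho \int_\R A\, d\tau + \rho^{-1}\int_\R A\, d\tau = 0$, a linear ODE whose zero initial value forces $\int_\R A(\rho,\cdot)\, d\tau = 0$ for all $\rho$. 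This guarantees that $B = \partial_\tau^{-1} A$ remains well-defined (its Fourier transform $\widehat{A}(k)/(ik)$ has no singularity at $k=0$) throughout the interval.

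Having the equation for $B$, I would run the energy/contraction estimate in the product space $A \in H^{s_A}$, $B \in H^{s_A-2}$ simultaneously: the forcing term $A^2$ for $B$ lives in $H^{s_A}$ (by the algebra property, since $s_A > \frac{3}{4} > \frac{1}{2}$), which controls it in the weaker norm $H^{s_A-2}$, while the linear $-\rho^{-1}B$ and dispersive $-\partial_\tau^3 B$ terms are handled exactly as for $A$. Closing the two estimates together on a common interval $[\rho_0,\rho_1]$ gives the combined bound
\begin{equation*}
\sup_{\rho \in [\rho_0,\rho_1]}\bigl(\|A(\rho,\cdot)\|_{H^{s_A}} + \|\partial_\tau^{-1} A(\rho,\cdot)\|_{H^{s_A-2}}\bigr) \leq C.
\end{equation*}
The main obstacle I expect is not the contraction itself but the bookkeeping needed to verify that the zero-mean constraint is genuinely propagated and that $B$ solves the claimed equation with no hidden low-frequency obstruction; once the Fourier multiplier $\widehat{A}(k)/(ik)$ is shown to stay in $L^2_{s_A-2}$ uniformly, the rest follows from standard KdV local theory. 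A minor technical point is ensuring the interval $[\rho_0,\rho_1]$ on which the $B$-estimate closes can be taken to coincide with (or be shrunk to) the $A$-existence interval, which is immediate since both are governed by the same nonlinear norm and can be chosen as the smaller of the two.
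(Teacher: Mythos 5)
Your proposal is correct and follows essentially the same route as the paper: local well-posedness of $A$ in $H^{s_A}(\R)$ for $s_A > \frac{3}{4}$ via \cite{KPV93}, with $\rho^{-1}A$ treated as a bounded linear perturbation for $\rho \geq \rho_0$, followed by propagation of the antiderivative bound through the equation satisfied by $B := \partial_\tau^{-1}A$. The one place where you do more work than necessary is the treatment of $B$: the paper writes the $B$-equation as $2\partial_\rho B + \rho^{-1}B + \partial_\tau^2 A - A^2 = 0$, i.e., with the dispersive term expressed as $\partial_\tau^2 A$ rather than $\partial_\tau^3 B$. Since $A \in C^0([\rho_0,\rho_1],H^{s_A}(\R))$ is already known from the first step, both $\partial_\tau^2 A$ and $A^2$ (algebra property, $s_A > \frac{1}{2}$) are known source terms in $H^{s_A-2}(\R)$, so the $B$-equation is merely a linear ODE in $\rho$ with bounded coefficient $\rho^{-1}$; direct integration yields $B \in C^0([\rho_0,\rho_1],H^{s_A-2}(\R))$ with no second fixed point, no Airy semigroup for $B$, and no interval-matching concern. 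This also makes your mean-zero bookkeeping unnecessary, and in fact that step is the only fragile point of your write-up: integrating \eqref{cKdV} over $\tau \in \R$ is formal at $H^{s_A}$ regularity, since $\int_\R A\, d\tau$ need not converge for $A(\rho,\cdot) \in H^{s_A}(\R)$ without an additional $L^1$ assumption. The clean way to handle the low-frequency issue --- which your product-space Duhamel construction in effect achieves --- is to define $B$ as the solution of its own equation with data $B_0$ and then identify $\partial_\tau B = A$ by uniqueness, noting that $w := \partial_\tau B - A$ solves $2\partial_\rho w + \rho^{-1} w = 0$ with $w|_{\rho = \rho_0} = 0$; then $\partial_\tau^{-1}$ is never inverted along the flow, only on the initial datum where the hypothesis supplies it. In short: correct, same approach, with the paper's version of the $B$-step being a one-line decoupled integration where you set up a coupled contraction.
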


\begin{proof}
The cKdV equation \eqref{cKdV} possesses a unique solution 
$ A \in C^0([\rho_0,\rho_1],H^{s_A}(\R)) $ for $ s_A > \frac{3}{4} $,  see \cite{KPV93}.
To obtain the estimate on $ B := \partial_{\tau}^{-1}  A $, we rewrite 
\eqref{cKdV} in the form:
$$
2 \partial_{\rho}  B +  \rho^{-1} B + \partial_{\tau}^2 A
- A^2 = 0 .
$$
Since $B_0 := \partial_{\tau}^{-1}  A_0  \in H^{s_A-2}(\R)$, integration of this equation with $ A \in C^0([\rho_0,\rho_1],H^{s_A}(\R)) $ yields 
$ B \in C^0([\rho_0,\rho_1],H^{s_A-2}(\R)) $.
\end{proof}

For estimating the residual ${\rm Res}(\varepsilon^2 \psi_{\rm cKdV})$  we consider a solution $ A \in C^0([\rho_0,\rho_1],H^{s_A}(\R)) $
of the cKdV equation \eqref{cKdV} with 
\begin{equation}
\label{assABsolu}
C_{A,B} := \sup_{\rho \in [\rho_0,\rho_1]}(\| A(\rho,\cdot) \|_{H^{s_A}} + \|  \partial_{\tau}^{-1}  A(\rho,\cdot) \|_{H^{s_A-2}} ) < \infty,
\end{equation}
and with $ s_A > \frac{3}{4} $ being sufficiently large.
Due to the correspondence $ \partial_t^{-1} = \varepsilon^{-1} \partial_{\tau}^{-1} $ we have the following lemma.

\begin{lemma}
Let $ s \geq 0 $. Assume \eqref{assABsolu} with $ s_A = s + 8 $ and $C_{A,B} > 0$. There exists a $ C_{\rm res} > 0 $ such that 
for all $ \varepsilon \in (0,1] $ we have
$$
\sup_{r \in [\rho_0 \varepsilon^{-3},\rho_1 \varepsilon^{-3}]}  \| \partial_t^{-1} {\rm Res}(\varepsilon^2 \psi_{cKdV})(r,\cdot) \|_{H^s} \leq C_{\rm res} \varepsilon^{\frac{13}{2}}.
$$
\label{lem-res-anti}
\end{lemma}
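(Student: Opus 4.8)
The plan is to follow the reduction already carried out for $R_1$ in the lines preceding the statement and then to apply $\partial_\tau^{-1}$ termwise to the residual $\textrm{Res}(\varepsilon^2 \psi_{\rm cKdV})$. The key structural fact, which the computation of $R_1$ makes precise, is that once the cKdV equation \eqref{cKdV} is used to replace $\partial_\rho A$, every summand in the rewritten residual carries an explicit outer factor $\partial_\tau$ \emph{except} for the single term $-\tfrac14\varepsilon^8\rho^{-2}A$. Consequently $\partial_\tau^{-1}$ acts on each piece either by cancelling one $\tau$-derivative or, on the exceptional term, by producing $-\tfrac14\varepsilon^8\rho^{-2}\partial_\tau^{-1}A = -\tfrac14\varepsilon^8\rho^{-2}B$ with $B := \partial_\tau^{-1}A$.

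First I would dispose of the $\partial_\tau$-exact terms. For each of them $\partial_\tau^{-1}$ simply removes the outer derivative, after which the remaining expression is estimated in $H^s(\R)$ in the $\tau$-variable exactly as in the proof of Lemma \ref{lem-res}: one trades every surviving $\partial_\rho$ for three $\partial_\tau$ via \eqref{cKdV}, and bounds the outcome by $C_{A,B}$ together with the Banach-algebra property of $H^{s_A}(\R)$ (here $s_A = s+8 > \tfrac12$) for the quadratic terms $A^2$, and analyticity of $N$ for the $N(\varepsilon^2 A)$ contribution. The regularity budget is essentially that of Lemma \ref{lem-res}: after one $\partial_\tau^{-1}$ the worst summand $\varepsilon^{10}\partial_\rho^2\partial_\tau^2 A$ costs $\varepsilon^9\partial_\tau^7 A$ worth of derivatives, so $s_A = s+8$ is comfortably sufficient (in fact one derivative more than strictly needed for this lemma). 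For the exceptional term I would invoke the hypothesis \eqref{assABsolu} directly: since $\|B(\rho,\cdot)\|_{H^{s_A-2}}\le C_{A,B}$ uniformly in $\rho\in[\rho_0,\rho_1]$, $H^{s_A-2}(\R)\hookrightarrow H^s(\R)$, and $\rho^{-2}\le\rho_0^{-2}$, the quantity $\tfrac14\varepsilon^8\rho^{-2}B$ is controlled in $H^s(\R)$ by $\tfrac14\varepsilon^8\rho_0^{-2}C_{A,B}$, uniformly over the range of $\rho$ corresponding to $r\in[\rho_0\varepsilon^{-3},\rho_1\varepsilon^{-3}]$.

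It then remains to collect the powers of $\varepsilon$. Using the correspondence $\partial_t^{-1} = \varepsilon^{-1}\partial_\tau^{-1}$, each term acquires one extra factor $\varepsilon^{-1}$ relative to the corresponding piece of $\textrm{Res}$, so the leading raw order becomes $\varepsilon^8\cdot\varepsilon^{-1} = \varepsilon^7$. Converting the $H^s$-norm in the physical variable $t$ into the $H^s$-norm in $\tau$ costs the usual factor $\varepsilon^{-1/2}$ from the scaling $\tau = \varepsilon(t-r)$, precisely as in Lemma \ref{lem-res}. Combining the two yields $\varepsilon^{7-1/2} = \varepsilon^{13/2}$ uniformly for $r\in[\rho_0\varepsilon^{-3},\rho_1\varepsilon^{-3}]$, which is the claimed bound; equivalently, up to constants the estimate is just $\varepsilon^{-1}$ times that of Lemma \ref{lem-res}. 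The only genuinely new ingredient compared with Lemma \ref{lem-res} — and the step I expect to be the crux — is the treatment of the non-$\partial_\tau$ term $-\tfrac14\varepsilon^8\rho^{-2}A$: applying $\partial_\tau^{-1}$ to it is legitimate only because $A$ has vanishing mean with $\widehat{A}(k)=\mathcal{O}(|k|)$ as $k\to0$, which is exactly the reason the solution class \eqref{class-solutions} with a finite bound on $\partial_\tau^{-1}A$ was set up in Lemma \ref{lemma-Bous}. Once that bound is in hand, the remainder is bookkeeping identical to the proof of the preceding lemma.
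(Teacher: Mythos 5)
Your proposal is correct and takes essentially the paper's own route: the paper likewise reduces the problem to the $R_1$ computation showing that every term of the residual is an exact $\tau$-derivative except $-\tfrac14\varepsilon^8\rho^{-2}A$, handles that exceptional term through the uniform bound on $B=\partial_\tau^{-1}A$ provided by \eqref{assABsolu} and Lemma \ref{lemma-Bous}, and obtains the rate $\varepsilon^{13/2}$ from the correspondence $\partial_t^{-1}=\varepsilon^{-1}\partial_\tau^{-1}$ combined with the $\varepsilon^{-1/2}$ loss under the scaling $\tau=\varepsilon(t-r)$, exactly as you argue. Your derivative bookkeeping (worst term costing seven $\tau$-derivatives after one $\partial_\tau^{-1}$, so that $s_A=s+8$ suffices with one derivative to spare) is likewise consistent with the paper's count for Lemma \ref{lem-res}.
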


\begin{remark}\label{rem33}
Without the transformation $ v = u + u^2 $ which converts (\ref{const3}) into (\ref{const5}), the terms in the residual ${\rm Res}(u)$ constructed similarly to (\ref{const4}) which have no $ \partial_t $ in front would be
$$ 
- \eps^8 \partial_{\rho}^2 A  - \eps^8 \rho^{-1}\partial_{\rho} A - \eps^{10} \partial_{\rho}^2  (A^2)  - \eps^{10} \rho^{-1}\partial_{\rho}  (A^2) .
$$ 
As above by replacing $ \partial_{\rho} A $ by the right-hand side of the cKdV equation (\ref{cKdV}) we gain derivatives in $\tau$. However, due to the $  \rho^{-1} A $ term in \eqref{cKdV} among other terms we would produce terms of the form $ \eps^8 \rho^{-2} A $ and $ \eps^{10}  \rho^{-2} A^2 $.
The operator $ \partial_{\tau}^{-1} $ can only be applied to these terms if $ A $ and $ A^2 $ have a vanishing mean value. However, $ A^2 $ can only have a vanishing mean value if $ A $ vanishes identically. Moreover, it doesn't help to consider  $ \partial_{\rho}^2 \partial_{\tau}^{-1} (A^2)  $ and $ \rho^{-1} \partial_{\rho} \partial_{\tau}^{-1} (A^2) $ directly since  the cKdV equation (\ref{cKdV}) does not preserve the $ L^2 $-norm of the solutions.
Therefore, the transformation $ v = u + u^2 $ is essential for our justification analysis.
\end{remark}

\subsection{Local existence and uniqueness}
\label{sec8}

Here we prove the local existence and uniqueness of the solutions of
the second-order evolution equation (\ref{const5}), which we rewrite as 
\begin{equation*}
(\partial_r^2 + r^{-1}\partial_r)(1- \partial_t^2 ) v = \partial_t^2 v + 
\partial_t^2 ( 1 + \partial_r^2 + r^{-1}\partial_r)  (- v^2 + N(v)) .
\end{equation*}
By using  $ \mathcal{B}^2 :=  \partial_t^2 (1- \partial_t^2 )^{-1} $, 
we rewrite the evolution problem in the form:
\begin{align}
(\partial_r^2 + r^{-1}\partial_r) v & = \mathcal{B}^2v + 
\mathcal{B}^2 ( 1 + \partial_r^2 + r^{-1}\partial_r)  (- v^2 + N(v)) 
\notag \\ 
& =  \mathcal{B}^2 v + 
\mathcal{B}^2  (- v^2 + N(v)) + r^{-1} \mathcal{B}^2 (-2v +N'(v))\partial_r v
\notag \\ 
& \quad + \mathcal{B}^2 \left[ (-2 v+N'(v))  \partial_r^2 v +(-2+N''(v))(\partial_r v)^2 \right] .
\label{second-order}
\end{align}
The operator $ \mathcal{B}^2 $ is bounded in Sobolev space $ H^s(\R) $ for every $s \in \mathbb{R}$. The second-order evolution equation (\ref{second-order}) can be rewritten as a first-order system by introducing  $ w := \partial_r v $ such that 
\begin{equation} 
\label{diffeq1}
\left\{ \begin{array}{l} 
\partial_r v = w, \\
\partial_r w  = f(v,w) , \end{array} \right.
\end{equation}
where
\begin{align*}
	f(v,w) & = - r^{-1} w + \left[ 1 - \mathcal{B}^2 (-2 v+N'(v)) \cdot \right]^{-1} \mathcal{B}^2 \left[ v - v^2 + N(v) + (-2+N''(v))w^2 \right] .
\end{align*}
Since $N(v) = \mathcal{O}(v^3)$ for small $v$, the right hand side of system \eqref{diffeq1} for sufficiently small $ v $ is  locally Lipschitz-continuous  in  $ H^s(\R) \times H^s(\R) $ for every $ s > \frac{1}{2} $ due to Sobolev's embedding theorem. The following 
local existence and uniqueness result holds due to the Picard-Lindel\"{o}f theorem.

\begin{theorem}\label{thexist}
Fix $ s > \frac{1}{2} $ and $ r_0 > 0 $. There exists a $ \delta_0 > 0 $ such that for all $ \delta \in (0,\delta_0) $ and $ (v_0,w_0) \in H^s(\R) \times H^s(\R) $   with $ \| v_0 \|_{H^s}  \leq \delta $, there exists $ r_1 > r_0 $ and a unique solution $ (v,w) \in C^0([r_0,r_1],H^s(\R) \times H^s(\R)) $ of system \eqref{diffeq1} with $ (v,w) |_{r = r_0} = (v_0,w_0) $.  
\end{theorem}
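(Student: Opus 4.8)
The plan is to recognize that Theorem~\ref{thexist} is a local existence and uniqueness statement for an autonomous-in-structure (though $r$-dependent) first-order ODE system in a Banach space, and to verify the hypotheses of the Picard--Lindel\"{o}f (Cauchy--Lipschitz) theorem in the space $X := H^s(\R) \times H^s(\R)$ with $s > \frac{1}{2}$. First I would fix the functional-analytic setting: the key structural fact, already stated in the excerpt, is that $\mathcal{B}^2 = \partial_t^2(1-\partial_t^2)^{-1}$ is a bounded operator on $H^s(\R)$ for every $s \in \R$, with operator norm bounded by $1$ since its Fourier symbol is $k^2/(1+k^2) \in [0,1)$. This is what makes the spatial dynamics formulation well posed for $\sigma = 1$: no derivative loss occurs in the vector field $f$, so $f$ maps $X$ into $H^s(\R)$ without smoothing requirements.

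The core of the argument is to show that the vector field on the right-hand side of \eqref{diffeq1}, namely $(v,w) \mapsto (w, f(v,w))$, is locally Lipschitz on a ball of radius $\delta_0$ in $X$, uniformly for $r$ in a compact interval bounded away from $0$. I would proceed term by term. The term $-r^{-1}w$ is linear and bounded with Lipschitz constant $r_0^{-1}$ on $[r_0, r_1]$. For the remaining terms I would use that $H^s(\R)$ is a Banach algebra for $s > \frac{1}{2}$ (Sobolev embedding $H^s \hookrightarrow L^\infty$), so that products such as $v^2$, $w^2$, and $v\,\partial_r^2 v$-type expressions are controlled in $H^s$ by the product of the $H^s$-norms of the factors, and the analytic nonlinearities $N(v), N'(v), N''(v) = \mathcal{O}(v^3),\mathcal{O}(v^2),\mathcal{O}(v)$ respectively are locally Lipschitz as superposition (Nemytskii) operators on a small ball where the power series converges. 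The one genuinely delicate factor is the inverse operator $\left[1 - \mathcal{B}^2(-2v + N'(v))\cdot\right]^{-1}$: I would show it is well defined and locally Lipschitz in $v$ by a Neumann series argument, choosing $\delta_0$ so small that $\|\mathcal{B}^2(-2v + N'(v))\cdot\|_{H^s \to H^s} \leq 2\delta_0(1 + o(1)) < \frac{1}{2}$ for $\|v\|_{H^s} \leq \delta_0$; since multiplication by a fixed $H^s$ function and composition with the bounded $\mathcal{B}^2$ are bounded on $H^s$, the Neumann series converges and depends Lipschitz-continuously on $v$ via the identity $L_1^{-1} - L_2^{-1} = L_1^{-1}(L_2 - L_1)L_2^{-1}$.

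Once local Lipschitz continuity of the full vector field is established on $B_{\delta_0}(0) \subset X$, the conclusion is immediate: the Picard--Lindel\"{o}f theorem in Banach spaces guarantees a unique local solution $(v,w) \in C^0([r_0,r_1], X)$ through any initial datum $(v_0,w_0)$ with $\|v_0\|_{H^s} \leq \delta \leq \delta_0$, on a possibly short interval $[r_0, r_1]$ whose length depends only on $\delta_0$, the Lipschitz constant, and $r_0$. I would note that the smallness hypothesis is imposed only on $\|v_0\|_{H^s}$ (not on $\|w_0\|_{H^s}$) because smallness of $v$ is what keeps the invertibility of $\left[1 - \mathcal{B}^2(-2v + N'(v))\cdot\right]$ and the convergence of $N$; the $w$-dependence of $f$ is polynomial and hence automatically locally Lipschitz, so $w_0$ need not be restricted.

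The main obstacle I anticipate is the careful treatment of the $v$-dependent inverse operator, since this is the place where the structure of the problem (boundedness of $\mathcal{B}^2$ and the Banach-algebra property) must be combined with a quantitative smallness condition to simultaneously guarantee invertibility and Lipschitz dependence. The term $(-2v + N'(v))\partial_r^2 v$ hidden inside $f$ (through the second line of \eqref{second-order}) deserves attention, but after rewriting via $\mathcal{B}^2$ it involves no net derivative loss, so it does not pose a regularity problem; the only real care is bookkeeping to confirm that every nonlinear term lands in $H^s$ and is Lipschitz there. All other steps are routine applications of the Banach algebra estimates and the abstract Picard--Lindel\"{o}f theorem.
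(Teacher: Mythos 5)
Your proposal is correct and follows essentially the same route as the paper, which likewise rewrites the problem via the bounded operator $\mathcal{B}^2 = \partial_t^2(1-\partial_t^2)^{-1}$, observes that the right-hand side of \eqref{diffeq1} is locally Lipschitz on $H^s(\R)\times H^s(\R)$ for $s>\frac{1}{2}$ (Sobolev embedding/Banach-algebra property, smallness of $v$ for the inverse operator and the analytic nonlinearity $N$), and concludes by Picard--Lindel\"{o}f. Your Neumann-series treatment of $\left[1-\mathcal{B}^2(-2v+N'(v))\cdot\right]^{-1}$ and the resolvent identity simply make explicit the details the paper leaves implicit in its one-line Lipschitz claim.
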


\begin{corollary}
	There exists a unique solution 
	$ (v,\partial_r v) \in C^0([r_0,r_1],H^s(\R) \times H^s(\R)) $ of the second-order evolution equation \eqref{const5} for the corresponding $ (v,\partial_r v) |_{r = r_0} = (v_0,w_0) $ .
\end{corollary}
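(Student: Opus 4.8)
The plan is to read the corollary off from Theorem~\ref{thexist} by exploiting the equivalence between the first-order system \eqref{diffeq1} and the second-order evolution equation \eqref{const5}. First I would apply Theorem~\ref{thexist} to the given data $(v_0,w_0)$, which produces some $r_1>r_0$ and a unique solution $(v,w)\in C^0([r_0,r_1],H^s(\R)\times H^s(\R))$ of \eqref{diffeq1} with $(v,w)|_{r=r_0}=(v_0,w_0)$. The first component equation $\partial_r v = w$ identifies $w$ with $\partial_r v$, and since $w\in C^0([r_0,r_1],H^s(\R))$ this shows $v\in C^1([r_0,r_1],H^s(\R))$ with $(v,\partial_r v)=(v,w)$ lying in the class asserted by the corollary. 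This settles the regularity and the attainment of the initial data.

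It then remains to check that this $v$ actually solves \eqref{const5}. Differentiating the identity $\partial_r v = w$ in $r$ and substituting the second equation of \eqref{diffeq1} gives $\partial_r^2 v = f(v,\partial_r v)$. Unwinding the definition of $f$, i.e. multiplying through by the operator $1 - \mathcal{B}^2(-2v+N'(v))\cdot$ and regrouping using the chain-rule identity $\partial_r^2(-v^2+N(v)) = (-2+N''(v))(\partial_r v)^2 + (-2v+N'(v))\partial_r^2 v$, returns exactly the form \eqref{second-order}, which is \eqref{const5}. The only analytic input needed here is that $1 - \mathcal{B}^2(-2v+N'(v))\cdot$ is boundedly invertible on $H^s(\R)$: this holds because $\mathcal{B}^2$ is bounded on $H^s(\R)$ for every $s$ and, by construction in Theorem~\ref{thexist} (Picard--Lindel\"of on a small ball), $v$ stays small on $[r_0,r_1]$, so that the multiplication function $-2v+N'(v)$ has small $L^\infty$ norm, the operator is a small perturbation of the identity, and its inverse is given by a convergent Neumann series.

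For uniqueness I would run the equivalence in reverse: any solution $(v,\partial_r v)\in C^0([r_0,r_1],H^s(\R)\times H^s(\R))$ of \eqref{const5} with the prescribed data defines, via $w:=\partial_r v$, a solution of the first-order system \eqref{diffeq1} with the same initial data, whence the uniqueness part of Theorem~\ref{thexist} forces it to coincide with the solution constructed above. The step I expect to require the most care is precisely the operator inversion: one must confirm that $-2v+N'(v)$ remains small in $L^\infty$ (equivalently in $H^s(\R)$ with $s>\frac{1}{2}$, via Sobolev embedding) uniformly on $[r_0,r_1]$, so that $1 - \mathcal{B}^2(-2v+N'(v))\cdot$ stays invertible throughout the interval and the two formulations are genuinely equivalent rather than merely formally so. The remainder is a routine translation between the system and the scalar equation.
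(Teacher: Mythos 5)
Your proposal is correct and follows essentially the same route as the paper: the corollary is read off from Theorem~\ref{thexist} via the equivalence of the first-order system \eqref{diffeq1} with the second-order equation \eqref{const5}, which the paper leaves implicit since \eqref{diffeq1} was constructed precisely as that reformulation. Your extra care about the invertibility of $1 - \mathcal{B}^2(-2v+N'(v))\cdot$ via a Neumann series for small $v$ makes explicit exactly the smallness assumption the paper already uses to get local Lipschitz continuity of $f$, so nothing is missing.
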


\begin{remark}\label{rem43}
A combination of the local existence and uniqueness result of Theorem \ref{thexist} with the subsequent error estimates, used as a priori estimates, guarantees the existence and uniqueness of the solutions of equations for the error terms, see equation \eqref{erroreq}, as long as the error is estimated to be small.
\end{remark}

\subsection{The $ L^2 $-error estimates}
\label{sec4}

We introduce the error function $ R $ through 
the decomposition 
$$ 
v = \varepsilon^2 \psi_{\rm cKdV} + \varepsilon^{\beta} R 
$$ 
with $\psi_{\rm cKdV}(r,t) = A(\rho,\tau)$ and $ \beta :=  \frac{7}{2} $ to be obtained from the energy estimates, see Section \ref{sec6}. The error function $ R $ satisfies 
\begin{align} 
0 & = (\partial_r^2 + r^{-1}\partial_r) R - \partial_t^2 (1+\partial_r^2 + r^{-1}\partial_r) R  \notag \\ & \quad 
+ 2\varepsilon^2 \partial_t^2 (1+\partial_r^2 + r^{-1}\partial_r) ( A R) 
+ \varepsilon^{\beta} \partial_t^2 (1+\partial_r^2 + r^{-1}\partial_r) (R^2) 
\notag
\\
& \quad -\varepsilon^{-\beta} \partial_t^2 ( 1 + \partial_r^2 + r^{-1}\partial_r) (N(\varepsilon^2 A + \varepsilon^{\beta} R) - N(\varepsilon^2 A)) \notag
\\& \quad - \varepsilon^{-\beta} \textrm{Res}(\varepsilon^2 A). \label{erroreq}
\end{align}
Before we start to estimate the error we note that there is no problem with regularity of solutions of equation \eqref{erroreq} in the following sense.  Rewriting \eqref{erroreq}  as \eqref{second-order} and \eqref{diffeq1} in Section \ref{sec8} shows that if $ (R,\partial_r R) \in C^0([r_0,r_1],H^s(\R) \times H^s(\R)) $, then $ \partial_r^2 R(r,\cdot) $ has the same regularity. 
In particular, we have the estimate: 

\begin{lemma}
There exist constant $ C_{\rm l} $ and a smooth monotone function $ C_{\rm n} $ such that for all $ \varepsilon \in (0,1) $ we have
\begin{align} \label{dttResti}
 \| \partial_r^2  R(r,\cdot) \|_{L^2} & \leq \varepsilon^{\frac{15}{2}-\beta} C_{\rm res} +  C_{\rm l} (1 + \varepsilon^2 C_A)  (\|  R(r,\cdot) \|_{L^2} +  \| \partial_r  R(r,\cdot) \|_{L^2}) \\ & \quad + \varepsilon^{\beta} C_{\rm n}(\|  R(r,\cdot) \|_{L^{\infty}} +  \| \partial_r  R(r,\cdot) \|_{L^{\infty}})
 (\|  R(r,\cdot) \|_{L^2} +  \| \partial_r  R(r,\cdot) \|_{L^2}) , \notag
\end{align}
where $C_{\rm res}$ is defined in Lemma \ref{lem-res} and $C_A$ is defined in \eqref{assAsolu}.
\label{lem-estimate}
\end{lemma}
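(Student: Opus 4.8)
The plan is to solve equation \eqref{erroreq} for $\partial_r^2 R$ in exactly the way the evolution equation \eqref{const5} was reduced to \eqref{second-order} and \eqref{diffeq1}. Grouping the purely linear terms in \eqref{erroreq}, the operator $(\partial_r^2 + r^{-1}\partial_r) - \partial_t^2(1+\partial_r^2 + r^{-1}\partial_r)$ applied to $R$ equals $(1-\partial_t^2)(\partial_r^2 + r^{-1}\partial_r)R - \partial_t^2 R$. I would therefore apply $(1-\partial_t^2)^{-1}$ to the whole of \eqref{erroreq} and use $\mathcal{B}^2 = \partial_t^2(1-\partial_t^2)^{-1}$, which is bounded on $H^s(\R)$ for every $s \in \R$, to put the equation in the form
\[
\partial_r^2 R = -r^{-1}\partial_r R + \mathcal{B}^2 R + \mathcal{N}(R) - \varepsilon^{-\beta}(1-\partial_t^2)^{-1}\textrm{Res}(\varepsilon^2 A),
\]
where $\mathcal{N}(R)$ collects the images under $\mathcal{B}^2(1+\partial_r^2 + r^{-1}\partial_r)$ of $2\varepsilon^2 AR + \varepsilon^{\beta} R^2$ together with the $N$-difference term.

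Next I would expand the $r$-derivatives in $\mathcal{N}(R)$ by the Leibniz rule, keeping in mind that every $r$-derivative of $A$ carries a factor of $\varepsilon$ (since $\partial_r A = -\varepsilon\partial_\tau A + \varepsilon^3\partial_\rho A$) and that $r^{-1} \leq \rho_0^{-1}\varepsilon^3$ on the interval $[\rho_0\varepsilon^{-3},\rho_1\varepsilon^{-3}]$. The only terms in $\mathcal{N}(R)$ carrying the top derivative $\partial_r^2 R$ are $2\varepsilon^2\mathcal{B}^2(A\,\partial_r^2 R)$ and $2\varepsilon^{\beta}\mathcal{B}^2(R\,\partial_r^2 R)$, plus a higher-order contribution through $N''$. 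I would transfer these to the left-hand side; since $\mathcal{B}^2$ is bounded on $L^2(\R)$, multiplication by $A$ is bounded by $\|A\|_{L^\infty} \leq C\,C_A$ (Sobolev embedding $H^{s_A}\hookrightarrow L^\infty$), and multiplication by $R$ is bounded by $\|R\|_{L^\infty}$, the operator multiplying $\partial_r^2 R$ on the left is $1 + \mathcal{O}(\varepsilon^2 C_A) + \mathcal{O}(\varepsilon^{\beta}\|R\|_{L^\infty})$. For $\varepsilon$ small enough and $R$ in the a priori regime of Remark \ref{rem43} this is invertible with inverse bounded by a geometric series, whose constants are absorbed into $C_{\rm l}$ and the monotone function $C_{\rm n}$.

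It then remains to estimate the resulting right-hand side in $L^2$. The residual contributes $\varepsilon^{-\beta}\|(1-\partial_t^2)^{-1}\textrm{Res}(\varepsilon^2\psi_{\rm cKdV})\|_{L^2} \leq \varepsilon^{15/2-\beta}C_{\rm res}$ by Lemma \ref{lem-res} with $s=0$ and the contractivity of $(1-\partial_t^2)^{-1}$. The terms $\mathcal{B}^2 R$ and $r^{-1}\partial_r R$, together with all linear-in-$R$ pieces of $2\varepsilon^2\mathcal{B}^2(1+\partial_r^2 + r^{-1}\partial_r)(AR)$ not carrying $\partial_r^2 R$ (these involve $A,\partial_r A,\partial_r^2 A$ acting on $R$ or $\partial_r R$, each bounded in $L^\infty$ by $C\,C_A$), give the bound $C_{\rm l}(1+\varepsilon^2 C_A)(\|R\|_{L^2}+\|\partial_r R\|_{L^2})$. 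The remaining quadratic pieces of $\varepsilon^{\beta}\mathcal{B}^2(1+\partial_r^2 + r^{-1}\partial_r)(R^2)$, namely $\varepsilon^{\beta}\mathcal{B}^2 R^2$, $\varepsilon^{\beta}\mathcal{B}^2(\partial_r R)^2$ and the mixed $r^{-1}$ term, are handled with $\|fg\|_{L^2}\leq\|f\|_{L^\infty}\|g\|_{L^2}$, producing $\varepsilon^{\beta}C_{\rm n}(\|R\|_{L^\infty}+\|\partial_r R\|_{L^\infty})(\|R\|_{L^2}+\|\partial_r R\|_{L^2})$; the $N$-difference term is of equal or higher order in $\varepsilon$ and $R$ and is absorbed into these same two contributions.

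The main obstacle is the appearance of the top-order term $\partial_r^2 R$ inside the quasilinear nonlinearities $\partial_r^2(AR)$ and $\partial_r^2(R^2)$: the estimate closes only after these are transferred to the left and the resulting near-identity operator is inverted, which is precisely why smallness of $\varepsilon$ (through $\varepsilon^2 C_A$) and of the error (through $\varepsilon^{\beta}\|R\|_{L^\infty}$) is essential. The bookkeeping of the Leibniz expansion — verifying that every genuinely top-order contribution carries a coefficient $A$ or $R$ that can be placed inside the invertible operator, while all surviving terms involve at most one $r$-derivative of $R$ — is the one place where care is needed; everything else is a routine application of the boundedness of $\mathcal{B}^2$ and the Sobolev product estimates.
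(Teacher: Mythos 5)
Your proposal is correct and follows essentially the same route as the paper, which obtains \eqref{dttResti} exactly by rewriting \eqref{erroreq} in the solved-for-$\partial_r^2 R$ form of \eqref{second-order}--\eqref{diffeq1}, using the boundedness of $\mathcal{B}^2 = \partial_t^2(1-\partial_t^2)^{-1}$ on $H^s(\R)$ and inverting the near-identity operator in the smallness regime of Remark \ref{rem43}, with the three contributions on the right matched term by term (residual via Lemma \ref{lem-res}, linear-in-$R$ terms with $A$-coefficients via $\|A\|_{L^\infty} \leq C\, C_A$ and $r^{-1} \leq \rho_0^{-1}\varepsilon^3$, quadratic terms via $\|fg\|_{L^2} \leq \|f\|_{L^{\infty}}\|g\|_{L^2}$). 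One cosmetic slip: the top-order $\partial_r^2 R$ contribution from the $N$-difference term enters through $N'$ (of size $\mathcal{O}(\varepsilon^4)$ since $N'(v) = \mathcal{O}(v^2)$), not through $N''$, which multiplies the first-order square $(\partial_r v)^2$ as in \eqref{second-order}.
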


\begin{remark}
The difficulty in estimating the error $R$ comes from fact that the error equation \eqref{erroreq} contains the linear terms of order $ \mathcal{O}(\varepsilon^2) $ while we have to bound the error on the interval 
$[\varepsilon^{-3} r_0, \varepsilon^{-3}r_1]$ of length $ \mathcal{O}(\varepsilon^{-3}) $. We get rid of this 
mismatch of powers in  $ \varepsilon $  by writing the terms of order $ \mathcal{O}(\varepsilon^2) $ as derivatives in $r$ such that these  can be either included in the balance of energy or be written as terms 
where derivatives fall on $ A $ which allows us to estimate these  terms to be of  order $ \mathcal{O}(\varepsilon^3) $.
\end{remark}

We  follow the approach used in the energy estimates for the KdV approximation for obtaining an $ H^1 $-estimate for $ R $ \cite{SchnICIAM,SchneiderWayne}. To obtain first the $ L^2 $-estimates for $ R $, we multiply \eqref{erroreq} with $ - \partial_r \partial_t^{-2} R $ and  integrate it w.r.t. $ t $. 
The term $ - \partial_r \partial_t^{-2} R $ is defined via its Fourier transform w.r.t. $ t $, i.e., with abuse of notation, by  $  \partial_t^{-1} R  = \mathcal{F}^{-1}( (ik)^{-1} \widehat{R})$. All integrals in $t$ are considered on $\R$ and Parseval's equality is used when it is necessary.
We report details of computations as follows. \\

{\bf i)} From the linear terms in $ R $ we then obtain 
\begin{align*}
s_{1} & = -\int (\partial_r^2 R )(\partial_r \partial_t^{-2}  R) dt  = \frac12 \frac{d}{dr}  \int (\partial_r \partial_t^{-1} R)^2 d t, \\ 
s_{2} & = -\int ( r^{-1}\partial_r R  )(\partial_r \partial_t^{-2}  R) dt  =  r^{-1} \int
(\partial_r \partial_t^{-1} R)^2 dt , \\ 
s_{3} & =   \int (\partial_t^2 R )(\partial_r \partial_t^{-2}  R) dt  = \frac12 \frac{d}{dr}  \int
R^2 dt, 
\\ 
s_{4} & =  \int  (\partial_t^2 \partial_r^2 R )(\partial_r \partial_t^{-2}  R) dt  = \frac12 \frac{d}{dr}  \int
(\partial_r R)^2 dt,  \\ 
 s_{5} & =   \int (r^{-1} \partial_t^2 \partial_r R  )(\partial_r \partial_t^{-2} R) dt  =  r^{-1} \int
(\partial_r R)^2 dt.
\end{align*} 

{\bf ii)}  From the mixed terms in $ AR $ we obtain 
\begin{align*}
s_{\rm mixed} & = - 2\varepsilon^2 \int (\partial_t^2 (1+\partial_r^2 + r^{-1}\partial_r) ( A R) )(\partial_r \partial_t^{-2}R) dt \\ 
& = - 2\varepsilon^2 \int ( (1+\partial_r^2 + r^{-1}\partial_r) ( A R) )(\partial_r  R) dt
= s_6 + s_7 + s_8,
\end{align*} 
where 
\begin{align*}
s_6 &:= - 2\varepsilon^2 \int  ( A R) (\partial_rR) dt, \\
s_7 &:= - 2\varepsilon^2 \int  (\partial_r^2( AR)) (\partial_r  R) dt, \\
s_8 &:= - 2\varepsilon^2 \int  (r^{-1}\partial_r( A R) )(\partial_r  R) dt.
\end{align*}
We find 
\begin{equation*}
s_{6} = - \varepsilon^2 \frac{d}{dr} \int   A R^2 dt  
+ \varepsilon^2 \int  ( \partial_r A) R^2  dt,
\end{equation*}
where the second term is estimated by 
\begin{equation*}
\left| \varepsilon^2 \int  ( \partial_r A) R^2  dt \right| \leq \varepsilon^2 \| \partial_r A \|_{L^{\infty}} \| R \|_{L^2}^2
\end{equation*}
which is $ \mathcal{O}(\varepsilon^3) $ since $  \partial_r A = -\varepsilon \partial_{\tau} A + \varepsilon^3 \partial_{\rho} A $ by the chain rule.
Next we have 
\begin{align*}
s_{7} = - 2\varepsilon^2 \int  (\partial_r^2 A)R (\partial_r  R) dt - 3\varepsilon^2 \int  (\partial_r A) (\partial_r  R)^2 dt  - \varepsilon^2 \frac{d}{dr} \int   A (\partial_r  R)^2 dt, 
\end{align*} 
which are estimated by 
\begin{align*}
\left| 2\varepsilon^2 \int  (\partial_r^2 A)R (\partial_r  R) dt \right|  & \leq  2\varepsilon^2 \| \partial_r^2 A \|_{L^{\infty}} \|  R \|_{L^2} \| \partial_r  R \|_{L^2}, \\
\left| 3\varepsilon^2 \int  (\partial_r A) (\partial_r  R)^2 dt \right| & \leq 3\varepsilon^2 \| \partial_r A \|_{L^{\infty}} \| \partial_r  R \|_{L^2}^2. 
\end{align*} 
These terms are at least of order $ \mathcal{O}(\varepsilon^3) $ 
since $  \partial_r A = \mathcal{O}(\varepsilon) $ and $  \partial_r^2 A = \mathcal{O}(\varepsilon^2) $ by the chain rule. For the last term, we obtain the estimate
\begin{equation*}
|s_{8}|  \leq 2\varepsilon^2 r^{-1} \| \partial_r A \|_{L^{\infty}} \|  R \|_{L^2} \| \partial_r  R \|_{L^2} 
+ 2\varepsilon^2 r^{-1} \|  A \|_{L^{\infty}}  \| \partial_r  R \|^2_{L^2} 
\end{equation*} 
which is of order $ \mathcal{O}(\varepsilon^5) $
since $ r \in [r_0 \varepsilon^{-3}, \rho_1 \varepsilon^{-3}] $.\\

{\bf iii)}  From the quadratic terms in $R$ we  obtain 
\begin{align*}
s_{\rm quad} & = - \varepsilon^{\beta} \int (\partial_t^2 (1+\partial_r^2 + r^{-1}\partial_r) ( R^2) )(\partial_r \partial_t^{-2}R) dt \\ 
& = - \varepsilon^{\beta} \int ( (1+\partial_r^2 + r^{-1}\partial_r) ( R^2) )(\partial_r  R) dt
= s_9 + s_{10} + s_{11},
\end{align*} 
where
\begin{align*}
s_{9} &:= - \varepsilon^{\beta} \int  R^2 (\partial_rR) dt =
 -  \frac{1}{3}  \varepsilon^{\beta} \frac{d}{dr} \int   R^3 dt, \\
s_{10} &:= - \varepsilon^{\beta} \int  (\partial_r^2( R^2)) (\partial_r  R) dt  = - \varepsilon^{\beta} \frac{d}{dr} \int   R (\partial_r R)^2 dt - \varepsilon^{\beta} \int   (\partial_r  R)^3 dt, \\
s_{11} &:=  - \varepsilon^{\beta} \int  (r^{-1}\partial_r( R^2) )(\partial_r  R) dt. 
\end{align*} 
The remaining terms can be estimated by
\begin{align*}
\left| \varepsilon^{\beta} \int   (\partial_r  R)^3 dt \right| &  \leq  \varepsilon^{\beta} \| \partial_r R \|_{L^{\infty}} \| \partial_r  R \|_{L^2}^2, \\ 
\left| \varepsilon^{\beta} \int  (r^{-1}\partial_r( R^2) )(\partial_r  R) dt \right| & \leq  2\varepsilon^{\beta} r^{-1}  \|  R \|_{L^{\infty}} \| \partial_r  R \|^2_{L^2} .
\end{align*}

{\bf iv)} For the terms collected in $ N $ we have 
\begin{align*}
s_{N} & = 
\varepsilon^{-\beta} \int ( \partial_t^2 ( 1 + \partial_r^2 + r^{-1}\partial_r) (N(\varepsilon^2 A + \varepsilon^{\beta} R) - N(\varepsilon^2 A)))(\partial_r  \partial_t^{-2} R) dt \\
& = 
\varepsilon^{-\beta} \int ( ( 1 + \partial_r^2 + r^{-1}\partial_r) (N(\varepsilon^2 A + \varepsilon^{\beta} R) - N(\varepsilon^2 A)))(\partial_r  R) dt 
\end{align*} 
Since $ N(v)  $ is analytic in $ v $ we have the representation 
$ 
N(v) = \sum_{n= 3}^{\infty} a_n v^n
$,
with coefficients $ a_n \in \R $, and so 
we find 
$$
\varepsilon^{-\beta} (N(\varepsilon^2 A + \varepsilon^{\beta} R) - N(\varepsilon^2 A)) = \varepsilon^{-\beta} \sum_{n= 3}^{\infty} a_n \sum_{j=1}^n
\left( \begin{array}{c} n \\ j \end{array} \right) (\varepsilon^2 A)^{n-j} ( \varepsilon^{\beta} R)^{j}.
$$ 
such that these terms are at least of order $ \mathcal{O}(\varepsilon^{4}) $ and make no problems for the estimates w.r.t. powers of $ \varepsilon $.  
However, we have to be careful about the regularity of these terms.  As a an example, we look at the terms with most time derivatives,
namely
\begin{equation*}
\int \partial_r^2 (A^{n-j}R^j) (\partial_r  R) dt = s_{12} + s_{13}+ s_{14}  ,
\end{equation*} 
where 
\begin{align*}
s_{12}  & := \int  (\partial_r^2 (A^{n-j})) R^j (\partial_r  R)   dt, \\
s_{13}  & := 2  \int  (\partial_r (A^{n-j})) (\partial_r (R^j)) (\partial_r  R) dt, \\
s_{14}  & :=  \int  A^{n-j}(\partial_r^2 ( R^j)) (\partial_r  R)  dt \\ 
& = j (j-1) \int  A^{n-j} R^{j-2} (\partial_r  R)^3  dt + j  \int  A^{n-j}  R^{j-1} (\partial_r^2  R) (\partial_r  R)  dt.
\end{align*} 
The second derivatives $ \partial_r^2  R $ is controlled 
in terms of $ R $ and $ \partial_r  R $ by means of \eqref{dttResti}.
As a result, there exists a constant $ C_{\rm l} $ and a smooth monotone function
$ C_{\rm n} $ such that for all $ \varepsilon \in (0,1) $ we have
\begin{align*}
|s_{N} | &  \leq   \varepsilon^{4} C_{\rm l}  (\|  R \|_{L^2}^2 +  \| \partial_r  R \|_{L^2}^2) \\ & \quad + \varepsilon^{2+\beta}  C_{\rm n}  (\|  R \|_{L^{\infty}} +  \| \partial_r  R \|_{L^{\infty}})
(\|  R \|_{L^2}^2 +  \| \partial_r  R \|_{L^2}^2).
\end{align*}

{\bf v)} The residual term gives
\begin{equation*}
s_{15} = \varepsilon^{-\beta} \int (\textrm{Res}(\varepsilon^2 A) ) (\partial_r \partial_t^{-2}  R) dt 
= - \varepsilon^{-\beta} \int \partial_t^{-1}(\textrm{Res}(\varepsilon^2 A) ) (\partial_r \partial_t^{-1}  R) dt .
\end{equation*}
It is estimated by 
\begin{equation*}
|s_{15}| \leq C_{\rm res} \varepsilon^{\frac{13}{2}-\beta} \| \partial_r \partial_t^{-1} R \|_{L^2}, 
\end{equation*}
where $C_{\rm res}$ is defined in Lemma \ref{lem-res-anti}.

\begin{remark}
Without the change of variables $ v = u+u^2 $ we would get additionally the following mixed terms 
\begin{equation*}
 -2 \varepsilon^2 \int(\partial_r^2 ( A R)  )(\partial_r \partial_t^{-2}  R) dt - 2\varepsilon^2 \int (r^{-1}\partial_r ( A R)  )(\partial_r \partial_t^{-2} R) dt  
 \end{equation*}
which cannot be written in an obvious manner as sums of a derivative w.r.t. $r$ and higher order terms. Without the change of variables $ v = u+u^2 $ according to Remark \ref{rem33} we cannot estimate $ \partial_t^{-1}(\textrm{Res}(\varepsilon^2 A) ) $ nor the counterpart to $ s_{15} $. This emphasizes again the necessity of the change of variables $ v = u + u^2$ in order to replace \eqref{const3} with \eqref{const5}.
\end{remark}

\subsection{The $ H^1 $-error  estimates}
\label{sec4b}

The energy quantity will be constructed in Section \ref{sec6} based on the derivative formulas for $ s_1 $,  $ s_3 $, $ s_4 $, and other terms.  It will be used for estimating the terms which we were not able to write as derivatives w.r.t. $r$. Since we need estimates for $ \| R \|_{L^{\infty}} $ we will use Sobolev's embedding 
\begin{equation}
\label{Sobolev-emb}
\| f \|_{L^{\infty}}  \leq C  \| f \|_{H^1}, \quad \forall f \in H^1(\R) 
\end{equation}
and hence we have to extend the energy by additional terms involving $\| \partial_t R \|_{L^2}^2$. To do so, we proceed here as in Section \ref{sec4} but now for the $ L^2 $-error estimates of the $ t $-derivatives. 

Based on the product rule
\begin{equation} \label{tame}
 \| \partial_t (uv) \|_{L^2} \leq \| u \|_{L^{\infty}} \| \partial_t v \|_{L^2}
+ \| v \|_{L^{\infty}} \| \partial_t u \|_{L^2} .
\end{equation}
we have the following generalization of the bound \eqref{dttResti} in Lemma \ref{lem-estimate}.

\begin{lemma}
There exist constant $ C_{\rm l} $, $ C_{t,res} $ and a smooth monotone function $ C_{\rm n} $ such that for all $ \varepsilon \in (0,1) $ we have
\begin{align} \label{dttH1esti}
 \| \partial_r^2  R(r,\cdot) \|_{H^1} &  \leq \varepsilon^{\frac{15}{2}-\beta} C_{\rm res} + C_{\rm l} (1 + \varepsilon^2 C_A)  (\|  R(r,\cdot) \|_{H^1} +  \| \partial_r  R(r,\cdot) \|_{H^1}) \\ & \quad + C_{\rm n}(\|  R(r,\cdot) \|_{L^{\infty}} +  \| \partial_r  R(r,\cdot) \|_{L^{\infty}})
\varepsilon^{\beta} (\|  R(r,\cdot) \|_{H^1} +  \| \partial_r  R(r,\cdot) \|_{H^1}), \nonumber
\end{align}
where $C_{\rm res}$ is defined in Lemma \ref{lem-res} and $C_A$ is defined in (\ref{assAsolu}).
\end{lemma}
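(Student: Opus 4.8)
The plan is to derive \eqref{dttH1esti} by the same mechanism that produces the $L^2$ bound \eqref{dttResti} of Lemma \ref{lem-estimate}, but now tracking one extra $t$-derivative by means of the tame product estimate \eqref{tame}. First I would solve the error equation \eqref{erroreq} algebraically for $\partial_r^2 R$. Collecting on the left the terms carrying two $r$-derivatives and using that $\mathcal{B}^2 = \partial_t^2(1-\partial_t^2)^{-1}$ is a bounded Fourier multiplier in $t$ on every $H^s(\R)$, the equation can be cast in the same form as \eqref{second-order}--\eqref{diffeq1}: after applying $(1-\partial_t^2)^{-1}$ and inverting the quasilinear operator $[\,1 - \mathcal{B}^2(-2\varepsilon^2 A - 2\varepsilon^\beta R + \dots)\,\cdot\,]$ one obtains an explicit representation
$$
\partial_r^2 R = \mathcal{L}[R,\partial_r R] + \mathcal{N}(\varepsilon^2 A + \varepsilon^\beta R)[R,\partial_r R,\partial_r^2 R] - \varepsilon^{-\beta}(1-\partial_t^2)^{-1}\,\textrm{Res}(\varepsilon^2 A),
$$
where $\mathcal{L}$ is the linear part and $\mathcal{N}$ collects the genuinely nonlinear contributions.

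For the linear part $\mathcal{L}$ I would estimate directly in $H^1$. Since all operators involved are $t$-Fourier multipliers commuting with $\partial_t$, applying $\partial_t$ to the representation merely reproduces the same operators acting on $\partial_t R$ and $\partial_t\partial_r R$, so the linear terms are bounded by $C_{\rm l}(1+\varepsilon^2 C_A)(\|R\|_{H^1}+\|\partial_r R\|_{H^1})$. The factor $(1+\varepsilon^2 C_A)$ arises exactly as in Lemma \ref{lem-estimate} from the mixed terms $2\varepsilon^2\partial_t^2(1+\partial_r^2+r^{-1}\partial_r)(AR)$: by the chain rule the amplitudes $A$, $\partial_t A$, $\partial_r A$ are $O(1)$ or $O(\varepsilon)$ in $L^\infty$ with constant $C_A$ from \eqref{assAsolu}, so multiplication by these factors costs at most $\varepsilon^2 C_A$ relative to the principal linear terms.

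For the nonlinear part $\mathcal{N}$, which contains $R^2$, the remainder $N(\varepsilon^2 A+\varepsilon^\beta R)-N(\varepsilon^2 A)$, and—crucially—terms in which $\partial_r^2 R$ itself sits inside a product (as in $s_{14}$ of Section \ref{sec4}), I would estimate each $H^1$-norm with the tame rule \eqref{tame}. The key point is that \eqref{tame} lets me put the $L^\infty$-norm on the factor to be kept small and the full $H^1$-norm on the remaining factor, so every nonlinear contribution is bounded by $C_{\rm n}(\|R\|_{L^\infty}+\|\partial_r R\|_{L^\infty})\,\varepsilon^\beta(\|R\|_{H^1}+\|\partial_r R\|_{H^1})$, with $C_{\rm n}$ a smooth monotone function of its $L^\infty$-arguments coming from the convergent Neumann series and the analytic series for $N$. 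Wherever $\partial_r^2 R$ appears inside such products I would substitute the previously established control of $\partial_r^2 R$, exactly as its controllability is invoked after $s_{14}$. Finally, the residual term is handled by applying Lemma \ref{lem-res} at the $H^1$ level, i.e. with $s=1$, giving $\|(1-\partial_t^2)^{-1}\textrm{Res}(\varepsilon^2 A)\|_{H^1}\le C_{\rm res}\varepsilon^{15/2}$ and hence the stated prefactor $\varepsilon^{15/2-\beta}C_{\rm res}$.

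The main obstacle I anticipate is the self-referential appearance of $\partial_r^2 R$: through the quasilinear terms $\mathcal{B}^2(-2v+N'(v))\partial_r^2 v$ in \eqref{second-order} it occurs on both sides of the representation, now measured in $H^1$. Closing the estimate requires that the inverse operator $[\,1-\mathcal{B}^2(-2\varepsilon^2 A-2\varepsilon^\beta R+\dots)\,\cdot\,]^{-1}$ be bounded on $H^1(\R)$, which follows from the Banach-algebra property of $H^1(\R)$ (here $s>\tfrac12$) together with the smallness of $\varepsilon^2 A$ and $\varepsilon^\beta R$ for $\varepsilon$ small; and it requires that \eqref{tame} be used systematically so that the coefficient of $\partial_r^2 R$ on the right stays $O(\varepsilon^2)+O(\varepsilon^\beta\|R\|_{L^\infty})$ and is absorbed into the left-hand side. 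Once these two points are secured, the bound \eqref{dttH1esti} follows termwise just as \eqref{dttResti}, the only genuine change being the replacement of the $L^2$-norms of $R$ and $\partial_r R$ by their $H^1$-norms while the nonlinear amplitudes retain their $L^\infty$-norms.
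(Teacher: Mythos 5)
Your proposal is correct and follows essentially the same route as the paper: the paper obtains the bound by rewriting the error equation \eqref{erroreq} in the solved-for-$\partial_r^2 R$ form of \eqref{second-order}--\eqref{diffeq1} (boundedness of $\mathcal{B}^2$, Neumann-series inversion of the quasilinear coefficient, residual bound from Lemma \ref{lem-res} with $s=1$) and then upgrades the $L^2$ bound \eqref{dttResti} to $H^1$ precisely via the tame product rule \eqref{tame}, which is exactly your argument. Your treatment of the self-referential $\partial_r^2 R$ terms and of the $\varepsilon^{15/2-\beta}C_{\rm res}$ prefactor matches the paper's (largely implicit) proof.
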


To get the $H^1$-error estimates, we multiply \eqref{erroreq} by $ \partial_r R $ and then integrate w.r.t. $ t $. We report details of computations as follows. \\

{\bf i)} From the linear terms in $ R $ we obtain   
\begin{align*}
r_1 & = \int (\partial_r^2 R )(\partial_r R) dt  = \frac12 \frac{d}{dr}  \int
(\partial_r R)^2 dt, \\ 
r_2 & = \int ( r^{-1}\partial_r R  )(\partial_r R) dt  =  r^{-1} \int
(\partial_r R)^2 dt , \\ 
r_3 & =  - \int (\partial_t^2 R )(\partial_r R) dt  = \frac12 \frac{d}{dr}  \int
(\partial_t R)^2 dt, \\ 
r_4 & =  -\int  (\partial_t^2 \partial_r^2 R )(\partial_r R) dt  = \frac12 \frac{d}{dr}  \int
(\partial_r \partial_t R)^2 dt,  \\ 
 r_5 & =  - \int (\partial_t^2 r^{-1}\partial_r R  )(\partial_r R) dt  =  r^{-1} \int
(\partial_r \partial_t R)^2 dt.
\end{align*}

{\bf ii)} From the mixed terms in $AR$ we obtain 
\begin{align*}
r_{\rm mixed} & = 2\varepsilon^2 \int (\partial_t^2 (1+\partial_r^2 + r^{-1}\partial_r) ( A R) )(\partial_r R) dt \\ 
& = - 2\varepsilon^2 \int ( (1+\partial_r^2 + r^{-1}\partial_r) \partial_t( A R) )(\partial_r \partial_t R) dt
= r_6 + r_7 + r_8,
\end{align*} 
where 
\begin{align*}
r_6 &:= - 2\varepsilon^2 \int (  \partial_t( A R) )(\partial_r \partial_t R) dt, \\
r_7 &:= - 2\varepsilon^2 \int  (\partial_r^2 \partial_t ( A R) )(\partial_r \partial_t R) dt, \\
r_8 &:= - 2\varepsilon^2 \int (r^{-1}\partial_r \partial_t ( A R)  )(\partial_t \partial_r R) dt.
\end{align*}
We find 
\begin{align*}
r_6 = - 2\varepsilon^2 \int (  \partial_t A ) R (\partial_r \partial_t R) dt - \varepsilon^2 \frac{d}{dr} \int  A (\partial_t R)^2 dt  + \varepsilon^2 \int  (\partial_r A) (\partial_t R)^2 dt,
\end{align*} 
which can be estimated as 
\begin{align*}
\left| 2\varepsilon^2 \int (  \partial_t A ) R (\partial_r \partial_t R) dt\right| & \leq 2\varepsilon^2 \| \partial_t A  \|_{L^{\infty}} \|R \|_{L^2} \| \partial_r \partial_t R \|_{L^2}, \\ 
\left| \varepsilon^2 \int  (\partial_r A) (\partial_t R)^2 dt \right| & \leq \varepsilon^2  \|\partial_r A\|_{L^{\infty}} \|  \partial_t R \|_{L^2}^2.
\end{align*} 
These terms are at least of order $ \mathcal{O}(\varepsilon^3) $ since 
$  \partial_r A $ and $  \partial_t A $ are   of order $ \mathcal{O}(\varepsilon) $ by the chain rule. Next we estimate $r_7$ for which we note that 
\begin{align*}
\frac{d}{dr} \int A (\partial_r \partial_t R)^2 dt = \int (\partial_r A) (\partial_r \partial_t R)^2 dt + 
2 \int A (\partial_r \partial_t R)(\partial_r^2 \partial_t R) dt 
\end{align*} 
and 
\begin{align*}
\partial_r^2 \partial_t ( A R) & = A \partial_r^2 \partial_t R +2 (\partial_rA) \partial_r \partial_t R +( \partial_tA) \partial_r^2  R \\ & \quad +2( \partial_t \partial_r A) \partial_r  R + 
( \partial_r^2 A) \partial_t  R  + (\partial_r^2 \partial_t A) R.
\end{align*} 
As a result, we obtain 
\begin{align*}
r_7  & = - \varepsilon^2 \frac{d}{dr} \int A (\partial_r \partial_t R)^2 dt + r_{7,a} + r_{7,b}+ r_{7,c} + r_{7,d}+ r_{7,e}
\end{align*} 
with 
\begin{align*}
r_{7,a} & := - 3\varepsilon^2 \int  (\partial_r A) (\partial_r \partial_t R)^2 dt , \\
r_{7,b} & := - 2\varepsilon^2 \int  ( \partial_t A) (\partial_r^2  R) (\partial_r \partial_t R) dt , \\
r_{7,c} & := - 4\varepsilon^2 \int  ( \partial_t \partial_r A) (\partial_r  R ) (\partial_r \partial_t R) dt , \\
r_{7,d} & := - 2\varepsilon^2 \int  ( \partial_r^2 A) (\partial_t  R ) (\partial_r \partial_t R) dt , \\
r_{7,e} & := - 2\varepsilon^2 \int  (\partial_r^2 \partial_t A) R (\partial_r \partial_t R) dt .
\end{align*} 
We estimate 
\begin{align*}
|r_{7,a}| & \leq 3\varepsilon^2  \| \partial_rA \|_{L^{\infty}} \| \partial_r \partial_t R\|_{L^2}^2 , \\
|r_{7,b}| &  \leq 2\varepsilon^2 \| \partial_tA\|_{L^{\infty}}\| \partial_r^2  R \|_{L^2} \|\partial_r \partial_t R\|_{L^2} , \\
|r_{7,c}| &  \leq 4\varepsilon^2  \|\partial_t \partial_r A\|_{L^{\infty}}\| \partial_r  R \|_{L^2} \|\partial_r \partial_t R\|_{L^2} , \\
|r_{7,d}| &  \leq 2\varepsilon^2  \| \partial_r^2 A\|_{L^{\infty}}\| \partial_t  R \|_{L^2} \|\partial_r \partial_t R\|_{L^2} , \\
|r_{7,e}| &  \leq 2\varepsilon^2  \|\partial_r^2 \partial_t A\|_{L^{\infty}}\| R \|_{L^2} \|\partial_r \partial_t R\|_{L^2}.
\end{align*} 
All these terms are at least of order $ \mathcal{O}(\varepsilon^3) $ because of the derivatives on $ A $ in $r$ and $t$. Moreover,  we can use  \eqref{dttResti} for estimating $ \| \partial_r^2  R \|_{L^2} $.
The last mixed term is decomposed with the product rule as 
\begin{align*}
r_8 = r_{8,a}+ r_{8,b} + r_{8,c}+ r_{8,d}, 
 \end{align*}
where
 \begin{align*}
r_{8,a} & := - 2\varepsilon^2 \int r^{-1} (\partial_r \partial_t  A) R  (\partial_r \partial_t R) dt,\\ 
 r_{8,b} & := - 2 \varepsilon^2 \int r^{-1}(\partial_t A )(\partial_r  R)(\partial_r \partial_t R) dt, \\
  r_{8,c} & := - 2 \varepsilon^2 \int r^{-1}(\partial_r  A) (\partial_t R) (\partial_r \partial_t R) dt, \\
  r_{8,d} & := - 2\varepsilon^2 \int r^{-1}A (\partial_r \partial_t R)^2 dt.
 \end{align*}
We estimate 
\begin{align*}
|r_{8,a}| &  \leq 2\varepsilon^2  r^{-1} \| \partial_r \partial_t  A\|_{L^{\infty}}\| R  \|_{L^2} \|\partial_r \partial_t R\|_{L^2},\\ 
| r_{8,b}|&  \leq 2 \varepsilon^2  r^{-1}  \|\partial_t A \|_{L^{\infty}}\|\partial_r  R\|_{L^2} \|\partial_r \partial_t R\|_{L^2}, \\
 | r_{8,c}|&  \leq 2 \varepsilon^2 r^{-1} \|\partial_r  A\|_{L^{\infty}}\|\partial_t R\|_{L^2} \|\partial_r \partial_t R\|_{L^2}, \\
 | r_{8,d}|&  \leq 2\varepsilon^2  r^{-1} \| A \|_{L^{\infty}}\|\partial_r \partial_t R\|_{L^2}^2 .
 \end{align*}

{\bf iii)}  From the quadratic terms in $R$ we  obtain 
\begin{align*}
r_{\rm quad} & =  \varepsilon^{\beta} \int (\partial_t^2 (1+\partial_r^2 + r^{-1}\partial_r) ( R^2) )(\partial_r R) dt \\ 
& = - \varepsilon^{\beta} \int ( (1+\partial_r^2 + r^{-1}\partial_r) \partial_t( R^2) )(\partial_r  \partial_tR) dt
= r_9 + r_{10} + r_{11},
\end{align*} 
where 
\begin{align*}
r_{9} &:= - 2\varepsilon^{\beta} \int  R(\partial_tR) (\partial_r \partial_t R) dt, \\
r_{10} &:= - \varepsilon^{\beta} \int (\partial_r^2 \partial_t  (R^2) )(\partial_r  \partial_t  R) dt, \\
r_{11} &:= - \varepsilon^{\beta} \int r^{-1} (\partial_r \partial_t(R^2)  )(\partial_r \partial_t R) dt .
\end{align*} 
The first term is estimated by 
\begin{equation*}
|r_{9} | \leq 2\varepsilon^{\beta} \| R \|_{L^{\infty}} \| \partial_tR\|_{L^2} \| \partial_r \partial_t R\|_{L^2} .
\end{equation*} 
The second term is rewritten by using 
\begin{align*}
 \frac{d}{dr} \int R (\partial_r  \partial_t  R)^2 dt = \int (\partial_r R) (\partial_r  \partial_t  R)^2 dt + 2 \int R (\partial_r  \partial_t  R) (\partial_r^2  \partial_t  R)dt 
 \end{align*}
and 
\begin{equation*}
\partial_r^2 \partial_t  (R^2) =  2 R \partial_r^2 \partial_t R + 4 (\partial_r R) \partial_r \partial_t R
+  2 (\partial_t R) \partial_r^2  R
\end{equation*}
in the form 
$$ 
r_{10} = -  \varepsilon^{\beta}  \frac{d}{dr} \int R (\partial_r  \partial_t  R)^2 dt + r_{10,a} + r_{10,b}
$$ 
with 
\begin{align*}
r_{10,a} & := - 3\varepsilon^{\beta} \int (\partial_r R) (\partial_r  \partial_t  R)^2 dt , \\
r_{10,b} & := -  2\varepsilon^{\beta} \int  (\partial_t R) (\partial_r^2  R) (\partial_r  \partial_t  R) dt.
\end{align*}
The remainder terms are estimated as follows 
\begin{align*}
|r_{10,a} |& \leq  3 \varepsilon^{\beta}  \|\partial_r R\|_{L^{\infty}} \| \partial_r \partial_t R \|_{L^2}^2 , \\
|r_{10,b}|& \leq  2\varepsilon^{\beta}  \|\partial_r^2  R\|_{L^{\infty}} \|\partial_t R\|_{L^2} \|\partial_r  \partial_t  R\|_{L^2} ,
 \end{align*}
where we can use \eqref{dttH1esti} and Sobolev's embedding \eqref{Sobolev-emb} to estimate $\| \partial_r R\|_{L^{\infty}}$ and $\| \partial_r^2 R \|_{L^{\infty}}$. The  last quadratic term is decomposed with the product rule as
\begin{equation*}
r_{11} =  r_{11,a}+r_{11,b},
 \end{equation*}
where
\begin{align*}
 r_{11,a} & :=  - 2 \varepsilon^{\beta} \int r^{-1}R (\partial_r \partial_t R)^2 dt , \\ 
r_{11,b} & := - 2 \varepsilon^{\beta} \int r^{-1} (\partial_r R)(\partial_t R  )(\partial_r \partial_t R)  dt , 
 \end{align*}
which we estimate by 
\begin{align*}
| r_{11,a}|& \leq 2 \varepsilon^{\beta}  r^{-1} \| R \|_{L^{\infty}} \| \partial_r \partial_t R \|_{L^2}^2 , \\ 
|r_{11,b}|  &\leq 2 \varepsilon^{\beta}  r^{-1} \| \partial_r R\|_{L^{\infty}} \| \partial_t R  \|_{L^2} \| \partial_r \partial_t R \|_{L^2}.
 \end{align*}

{\bf iv)} For the terms collected in $ N $ we have 
\begin{align*}
r_{N} & = -\varepsilon^{-\beta} \int ( \partial_t^2 ( 1 + \partial_r^2 + r^{-1}\partial_r) (N(\varepsilon^2 A + \varepsilon^{\beta} R) - N(\varepsilon^2 A)))(\partial_r  R) dt \\
& = 
\varepsilon^{-\beta} \int ( ( 1 + \partial_r^2 + r^{-1}\partial_r) \partial_t(N(\varepsilon^2 A + \varepsilon^{\beta} R) - N(\varepsilon^2 A)))(\partial_r  \partial_t R) dt 
\end{align*} 
Proceeding as for the $ L^2 $-estimate 
and using the bound \eqref{dttH1esti} on the second derivative 
$ \partial_r^2  R $ in terms of $ R $ and $ \partial_r  R $ yields 
the existence of a constant $ C_{14,l} $ and a smooth monotone function $ C_{14,n} $ such that for all $ \varepsilon \in (0,1) $ we have
\begin{align*}
|r_{N} | &  \leq C_{14,l}  \varepsilon^{4}  (\|  R \|_{H^1}^2 +  \| \partial_r  R \|_{H^1}^2) \\ & \quad  + C_{14,n}(\|  R \|_{L^{\infty}} +  \| \partial_r  R \|_{L^{\infty}})
\varepsilon^{2+\beta} (\|  R \|_{H^1}^2 +  \| \partial_r  R \|_{H^1}^2).
\end{align*}

{\bf v)} The residual term
\begin{equation*}
 r_{12} = -\varepsilon^{-\beta} \int (\textrm{Res}(\varepsilon^2 A) )
 (\partial_r R) dt
\end{equation*}
is estimated by 
 \begin{equation*}
|r_{12}| = C_{\rm res} \varepsilon^{\frac{15}{2} -\beta}  \| \partial_r R \|_{L^2},
\end{equation*}
where $C_{\rm res}$ is defined in Lemma \ref{lem-res}.

\subsection{Energy estimates}
\label{sec6}

We use the terms $ s_1 $,  $ s_3 $,  $ s_4 $,  $ r_1 $,  $ r_3 $,  $ r_4 $,  and the parts of $ s_{6} $,  $ s_{7} $, $ s_9 $, $ s_{10}$,  $ r_{6} $,   $ r_{7} $,  and $ r_{10}$ with derivatives in $r$ to define the following energy 
$$ 
E = E_0 + E_1
$$
with 
 \begin{align*}
E_0 & =  \frac12\int
R^2 dt + \frac12 \int
(\partial_r \partial_t^{-1} R)^2 dt + \frac12 \int
(\partial_r  R)^2 dt \\ & \quad  +
\frac12\int
(\partial_t R)^2 dt + \frac12 \int
(\partial_r R)^2 dt + \frac12 \int
(\partial_r \partial_t R)^2 dt , \\
E_1 & =  - \varepsilon^2 \int A   R^2 dt - \varepsilon^2 \int A (\partial_r  R)^2 dt - \frac{1}{3} \varepsilon^{\beta} \int R^3 dt 
-  \varepsilon^{\beta} \int R (\partial_r R)^2 dt
\\ & \quad 
- \varepsilon^2 \int A (\partial_t  R)^2 dt - \varepsilon^2 \int A (\partial_r \partial_t  R)^2 dt
-  \varepsilon^{\beta} \int R (\partial_r \partial_t R)^2 dt.
\end{align*}
The energy part $ E_0 $ is an upper bound for the squared $ H^1 $-norm of $R$, $\partial_t^{-1} R$, and $\partial_r R$. Moreover,  for all $ M > 0 $ there exists an $ \varepsilon_1 > 0 $ such that for all $  \varepsilon 
\in (0,\varepsilon_1) $ we have 
$$ 
\frac12 E_0 \leq E_1 \leq \frac32 E_0
$$ 
as long as $ E^{1/2} \leq M $. All other linear terms which are not contained in the energy $ E $ 
have either  a $ r^{-1} = \varepsilon^3 \rho^{-1} $  in front,  namely 
$ s_2 $, $ s_5$, $ s_8 $,  $ r_2 $, $ r_5$, and $ r_8 $, or contain a time or space derivative of $ A $,  as parts of $s_6$, $s_7$, $r_6$, and $r_7$, and so all other linear terms are at least of  order $ \mathcal{O}(\varepsilon^3) $.
All nonlinear terms have at least a  $ \varepsilon^{4} $ or $ \varepsilon^{\beta} $ in front. The residual terms $ s_{15} $ and $ r_{16} $ are of  order $ \mathcal{O}(\varepsilon^3) $ if $\beta$ is chosen as 
$\beta = \frac{7}{2}$. As a result, we estimate the rate of change of 
energy $E$ from the following inequality 
\begin{align} 
\frac{d}{d r} E &\leq  C \varepsilon^3 E + C \varepsilon^{7/2}      E^{3/2} + C  \varepsilon^3 E^{1/2}  \notag \\
&\leq  2 C \varepsilon^3 E + C \varepsilon^{7/2}      E^{3/2} + C  \varepsilon^3,
\label{smitseq4}
\end{align}
with a constant $ C $ independent of $   \varepsilon \in (0,\varepsilon_1 ) $ as lomg as $ E^{1/2} \leq M $.
Under the assumption that $ C \varepsilon^{1/2}      E^{1/2}  \leq 1 $ we obtain 
$$ 
\frac{d}{d t} E \leq (2 C+1) \varepsilon^3 E + C  \varepsilon^3.
$$
Gronwall's inequality immediately gives the bound 
$$ 
\sup_{t \in [0,T_0/\varepsilon^3]} E(t) = C T_0 e^{ (2 C+1)T_0} =: M = \mathcal{O}(1)
$$ 
and so $ \sup\limits_{t \in [0,T_0/\varepsilon^3]} \|  R(t) \|_{H^1} = \mathcal{O}(1) $.
Finally choosing $ \varepsilon_2 > 0 $ so small that $ C \varepsilon_2^{1/2}      M^{1/2}  \leq 1 $ gives the required estimate for all $  \varepsilon \in (0,\varepsilon_0 ) $
with  $ \varepsilon_0 = \min ( \varepsilon_1 , \varepsilon_2 )> 0$. Therefore, we have proved Theorem \ref{mainthapp}.

\section{Solitary wave solutions of the cKdV equation}
\label{newsec3}

Here we prove Theorem \ref{th-soliton}. We look for solutions of the cKdV equation (\ref{cKdV-intro}) in the class of solitary waves represented in the form
\begin{equation}
\label{Hirota-u}
A(\rho,\tau) = -6  \partial_{\tau}^2 \log f(\rho,\tau),
\end{equation}
which tranforms \eqref{cKdV-intro} to the following bilinear equation \cite{NakamuraChen80}:
\begin{equation}
\label{Hirota-cKdV}
2 \left[  f \partial_{\rho} \partial_{\tau} f - (\partial_{\rho} f) (\partial_{\tau} f) \right] + \rho^{-1} f \partial_{\tau} f + f \partial_{\tau}^4 f - 4 (\partial_{\tau} f) (\partial_{\tau}^3 f) + 3 (\partial_{\tau}^2 f)^2 = 0.
\end{equation}
To prove Theorem \ref{th-soliton}, we analyze solutions of \eqref{Hirota-cKdV} in the self-similar form  \cite{Step1,Step2,Step3}:
\begin{equation}
\label{self-similar}
f(\rho,\tau) = 1 + \frac{1}{(6 \rho)^{1/3}} F(z), \qquad z = \frac{\tau}{(6 \rho)^{1/3}}
\end{equation}
with some $F \in C^{\infty}(\R,\R)$. The form (\ref{Hirota-u}) and (\ref{self-similar}) yields (\ref{A-Hirota}). We give a complete characterization for all possible solutions for $F(z)$ and prove that there exist no square integrable function $A(\rho,\tau)$ w.r.t. $\tau$. The proof is based on the three results obtained in the following three lemmas.

The first result gives the most general expression for $F(z)$ in \eqref{self-similar}. 

\begin{lemma}
	\label{lemma-1}
	The most general solution $f(\rho,\tau)$ of the bilinear equation (\ref{Hirota-cKdV}) in the self-similar form \eqref{self-similar} with $F \in C^{\infty}(\R,\R)$ is given by
	\begin{equation}
	\label{lin-super}
	F(z) = \alpha \left[ (w_1')^2 - z w_1^2 \right] \pm 2 \sqrt{\alpha \beta} 
	\left[ w_1' w_2' - z w_1 w_2 \right] + \beta \left[ (w_2')^2 - z w_2^2 \right],
	\end{equation}
	where $\alpha,\beta \in \mathbb{R}$ are arbitrary such that $\alpha \beta \geq 0$ and $w_1(z) := {\rm Ai}(z)$, $w_2(z) := {\rm Bi}(z)$  are two linearly independent solutions of the Airy equation 
	\begin{equation}
	\label{Airy}
	w''(z) - z w(z) = 0.
	\end{equation}
\end{lemma}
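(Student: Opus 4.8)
The plan is to substitute the self-similar ansatz \eqref{self-similar} into the bilinear equation \eqref{Hirota-cKdV}, separate the result by powers of the similarity variable, and then analyze the two resulting ODEs for $F$ in turn. Writing $s := (6\rho)^{1/3}$ and $z = \tau/s$, so that $f = 1 + s^{-1} F(z)$, I would compute all $\tau$- and $\rho$-derivatives by the chain rule, using $ds/d\rho = 2 s^{-2}$ and $\partial_\rho z = -2 z s^{-3}$, and insert them into \eqref{Hirota-cKdV}. The key structural observation is that every resulting term carries either $s^{-5}$ or $s^{-6}$, with coefficients depending on $z$ alone. Since $z$ and $s$ are independent (for fixed $z$ the value $s>0$ is free as $\rho$ ranges over $(0,\infty)$ with $\tau = z s$), the equation $c_5(z) s^{-5} + c_6(z) s^{-6} = 0$ splits into two ODEs: the $s^{-5}$ part, arising from the constant $1$ in $f$, is the linear equation $F^{(4)} - 4 z F'' - 2 F' = 0$, and the $s^{-6}$ part, arising from the $s^{-1}F$ term, is the quadratic equation $F F^{(4)} - 4 F' F''' + 3 (F'')^2 + 2 F F' - 4 z F F'' + 4 z (F')^2 = 0$.

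First I would solve the linear equation. Since $F^{(4)} - 4 z F'' - 2 F' = \frac{d}{dz}\left( F''' - 4 z F' + 2 F \right)$, it integrates once to $F''' - 4 z F' + 2 F = c_1$ for a constant $c_1$, with $F = c_1/2$ a particular solution. For the homogeneous equation I would verify directly, using only $w'' = z w$, that each of $p_1 := (w_1')^2 - z w_1^2$, $q := w_1' w_2' - z w_1 w_2$, and $p_2 := (w_2')^2 - z w_2^2$ solves $F''' - 4 z F' + 2 F = 0$; the clean identities $p_1' = - w_1^2$, $q' = - w_1 w_2$, $p_2' = - w_2^2$ make this a one-line check after a further differentiation. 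Because $w_1^2, w_1 w_2, w_2^2$ are linearly independent, so are $p_1', q', p_2'$, hence $p_1, q, p_2$ are linearly independent solutions of a third-order linear ODE and thus span its solution space. Consequently every $F \in C^\infty(\R,\R)$ solving the linear equation has the form $F = \tfrac{c_1}{2} + a p_1 + b q + c p_2$ with $c_1, a, b, c \in \R$.

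It remains to impose the quadratic equation on this four-parameter family. Using $F''' = 4 z F' - 2 F + c_1$ and $F^{(4)} = 4 z F'' + 2 F'$ to eliminate the high derivatives, I expect the quadratic equation to collapse to $H := 3 (F'')^2 + 12 F F' - 12 z (F')^2 - 4 c_1 F' = 0$. Differentiating $H$ and again substituting the linear relations should yield the clean identity $H' = 2 c_1 F''$. The purely constant solution $F \equiv c_1/2$ contributes nothing to $A = -6 \partial_\tau^2 \log f$ and may be normalized away; for any solution with $(a,b,c) \neq (0,0,0)$ one has $F'' \not\equiv 0$, so $H \equiv 0$ forces $c_1 = 0$, after which $H$ is constant. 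With $c_1 = 0$ one has $F' = -(a w_1^2 + b w_1 w_2 + c w_2^2)$, and I would evaluate the constant $H$ — most efficiently via bilinearity of $H$ in $(a,b,c)$ together with the constant Wronskian $W_0 := w_1 w_2' - w_1' w_2 \neq 0$ — to get $H = 3 W_0^2 (b^2 - 4 a c)$. Hence the quadratic equation holds if and only if $b^2 = 4 a c$, which automatically forces $a c \geq 0$; writing $\alpha = a$, $\beta = c$, $b = \pm 2\sqrt{\alpha\beta}$ reproduces exactly \eqref{lin-super} with $\alpha\beta \geq 0$, equivalently $F = (W')^2 - z W^2$ for a single real Airy solution $W = \sqrt{\alpha}\, w_1 \pm \sqrt{\beta}\, w_2$.

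The main obstacle is the bookkeeping in the two substitution computations: first confirming that the separation into orders $s^{-5}$ and $s^{-6}$ produces precisely the stated linear and quadratic ODEs, and second carrying out the elimination that reduces the quadratic equation to $H = 0$ and then evaluating the constant $H = 3 W_0^2 (b^2 - 4 a c)$. The conceptual content is light once $H' = 2 c_1 F''$ is in hand, since this single identity simultaneously removes the additive constant and enforces the perfect-square condition $b^2 = 4 a c$ that underlies \eqref{lin-super}.
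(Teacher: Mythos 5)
Your proposal is correct and takes essentially the same route as the paper: substitution and separation into the linear fourth-order ODE and the quadratic constraint, solution of the linear equation by the Airy products $(w')^2 - z w^2$ (where the paper cites Abramowitz--Stegun 10.4.57 for $G = -F'$, you verify the three solutions and their independence directly), and reduction of the quadratic constraint to the constant $3 W_0^2 (b^2 - 4ac) = 0$, which is exactly the paper's identity $(\gamma^2 - 4\alpha\beta)(w_1 w_2' - w_1' w_2)^2 = 0$. Your computation $H' = 2 c_1 F''$ is a compressed version of the paper's comparison of its two first integrals \eqref{ode-4} and \eqref{ode-5}, which likewise forces the integration constants to vanish, and your explicit dismissal of the trivial constant solution is if anything slightly more careful than the paper's treatment.
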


\begin{proof}
	Substituting \eqref{self-similar} into \eqref{Hirota-cKdV} shows that the variables are separated and $F(z)$ satisfies an overdetermined system of two (linear and quadratic) differential equations:
	\begin{equation}
	\label{ode-1}
	F''''(z) - 4 z F''(z) - 2 F'(z) = 0
	\end{equation}
	and
	\begin{equation}
		\label{ode-2}
4 F'(z) [ z F'(z) + F(z) - F'''(z) ] + 3 [F''(z)]^2 = 0.
\end{equation}	
Let $G(z) := -F'(z)$. Then \eqref{ode-1} reduces to the third-order equation 
$$
G'''(z) - 4 z G'(z) - 2 G(z) = 0,
$$
the general solution of which is known (see 10.4.57 in \cite{AS1972}):
	\begin{equation}
\label{G-expansion}
G(z) = \alpha [{\rm Ai}(z)]^2 + \beta [{\rm Bi}(z)]^2 + \gamma {\rm Ai}(z) {\rm Bi}(z),
\end{equation}
where $\alpha,\beta,\gamma$ are arbitrary. Denoting $w_1(z) := {\rm Ai}(z)$ and $w_2(z) := {\rm Bi}(z)$, we confirm that 
\begin{align*}
\frac{d}{dz} [(w_{1,2}')^2 - z w_{1,2}^2] &= 2 w_{1,2}' (w_{1,2}'' - z w_{1,2}) - w_{1,2}^2 = -w_{1,2}^2
\end{align*}
and 
\begin{align*}
\frac{d}{dz} [ w_1' w_2' - z w_1 w_2] &= (w_1''-zw_1) w_2' + w_1'(w_2''-z w_2) - w_1 w_2 = -w_1 w_2
\end{align*}
Hence, $F'(z) = -G(z)$ is integrated to the form
	\begin{equation}
\label{F-expression}
F(z) = C + \alpha \left[ (w_1')^2 - z w_1^2 \right] + \beta \left[ (w_2')^2 - z w_2^2 \right] + \gamma 
\left[ w_1' w_2' - z w_1 w_2 \right],
\end{equation}
where $C$ is an integration constant. The same constant $C$ appears in the integration of \eqref{ode-1} to the form 
	\begin{equation}
\label{ode-3}
F'''(z) - 4 z F'(z) + 2 F(z) = 2 C.
\end{equation}
It remains to verify if the general solution \eqref{F-expression} satisfies the quadratic equation \eqref{ode-2}.  Multiplying \eqref{ode-3} by $F''(z)$ and integrating, we obtain 
\begin{equation}
\label{ode-4}
[F''(z)]^2 - 4 z [F'(z)]^2 + 4 F(z) F'(z) = 4C F'(z) + D,
\end{equation}
where $D$ is another integration constant. On the other hand, substituting \eqref{ode-3} into \eqref{ode-2} yields 
\begin{equation}
\label{ode-5}
[F''(z)]^2 - 4 z [F'(z)]^2 + 4 F(z) F'(z) = \frac{8}{3} C F'(z).
\end{equation}
Comparison of \eqref{ode-4} and \eqref{ode-5} yields $C = D = 0$. Finally, we substitute \eqref{F-expression} into \eqref{ode-5} with $C = 0$ and obtain
\begin{align*}
0 &= [F''(z)]^2 - 4z [F'(z)]^2 + 4 F(z) F'(z) \\
&= (\gamma^2 - 4 \alpha \beta) (w_1 w_2' - w_1' w_2)^2,
\end{align*}
where the Wronskian of two linearly independent solutions is nonzero, 
$w_1 w_2' - w_1' w_2 \neq 0$. Hence, the system \eqref{ode-1}-\eqref{ode-2} is compatible for the solution \eqref{F-expression} if and only if $C = 0$ and $\gamma = \pm 2 \sqrt{\alpha \beta}$ with only two arbitrary constants $\alpha, \beta \in \mathbb{R}$. 
\end{proof}

The solution $F(z)$ in \eqref{lin-super} is real if and only $\alpha \beta \geq 0$. 
The next result shows that the expression \eqref{self-similar} with this $F$ is sign-definite (positive) if and only if $\alpha \geq 0$ and $\beta = 0$. 

\begin{lemma}
	\label{lemma-2}
Let $F$ be given by \eqref{lin-super} with $\alpha \beta \geq 0$. For every $k > 0$, we have $k + F(z) > 0$ for every $z \in \mathbb{R}$ if and only if $\alpha \geq 0$ and $\beta = 0$. 
\end{lemma}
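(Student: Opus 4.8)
The plan is to recognize that, once the compatibility $\gamma = \pm 2\sqrt{\alpha\beta}$ from Lemma \ref{lemma-1} is imposed, the expression \eqref{lin-super} is a perfect quadratic combination built from a single real solution of the Airy equation \eqref{Airy}; after this observation a monotonicity argument reduces the whole question to the convergence of a single integral on each half-line. Concretely, in the admissible case $\alpha,\beta \geq 0$ I would set $W(z) := \sqrt{\alpha}\,{\rm Ai}(z) \pm \sqrt{\beta}\,{\rm Bi}(z)$, which is again a real solution of \eqref{Airy}, and expand $[W']^2 - zW^2$ exactly as in the computation preceding \eqref{F-expression} to obtain
\[
F(z) = [W'(z)]^2 - z[W(z)]^2 .
\]
In the other admissible case $\alpha,\beta \leq 0$ the same algebra with $\tilde W := \sqrt{|\alpha|}\,{\rm Ai} \mp \sqrt{|\beta|}\,{\rm Bi}$ gives $F(z) = -\bigl([\tilde W'(z)]^2 - z[\tilde W(z)]^2\bigr)$.

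The structural fact I would then exploit is that for \emph{any} real solution $W$ of \eqref{Airy},
\[
\frac{d}{dz}\bigl([W']^2 - zW^2\bigr) = 2W'(W'' - zW) - W^2 = -W^2 \leq 0 ,
\]
so $p_W(z) := [W']^2 - zW^2$ is non-increasing, and by integration $p_W(z) = p_W(0) + \int_z^0 W(s)^2\,ds$ governs the left tail while $p_W(z) = p_W(0) - \int_0^z W(s)^2\,ds$ governs the right tail. In particular $\inf_z p_W$ is attained in the limit $z \to +\infty$.

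For sufficiency, if $\alpha \geq 0$ and $\beta = 0$ then $W = \sqrt{\alpha}\,{\rm Ai}$; since ${\rm Ai}$ and ${\rm Ai}'$ decay exponentially at $+\infty$, both terms of $p_W$ vanish there, so $F(z) = \alpha\int_z^\infty {\rm Ai}(s)^2\,ds \geq 0$ and hence $k + F(z) \geq k > 0$ for every $k > 0$. For necessity I would first note that ``$k + F(z) > 0$ for every $k > 0$'' is equivalent to $F(z) \geq 0$ (let $k \downarrow 0$), and then rule out the remaining possibilities compatible with $\alpha\beta \geq 0$: (i) if $\beta > 0$, then $W$ carries the exponentially growing ${\rm Bi}$, so $\int_0^\infty W^2 = \infty$ and the right-tail formula forces $F(z) \to -\infty$ as $z \to +\infty$; (ii) if $\beta < 0$, or if $\beta = 0$ with $\alpha < 0$, then $F$ equals $-p_{\tilde W}$ (respectively $\alpha\, p_{\rm Ai}$ with $\alpha < 0$), and I would use $\int_{-\infty}^0 W^2 = \infty$ to conclude $F(z) \to -\infty$ as $z \to -\infty$. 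In each case $F$ takes negative values, contradicting $F \geq 0$.

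The hard part will be justifying the divergence $\int_{-\infty}^0 W(s)^2\,ds = \infty$ for a nontrivial real Airy solution, which is precisely what distinguishes the oscillatory left tail from the exponentially decaying Airy right tail. This rests on the standard oscillatory asymptotics ${\rm Ai}(z), {\rm Bi}(z) = \mathcal{O}(|z|^{-1/4})$ with non-vanishing amplitude as $z \to -\infty$, so that $W(z)^2$ is on average comparable to $|z|^{-1/2}$, which is not integrable at $-\infty$; alternatively one may use the bound $p_W(z) \geq |z|\,W(z)^2$ for $z < 0$ together with the oscillation of $W$ to see directly that $p_W(z) \to +\infty$. Everything else reduces to the elementary dichotomy between exponential growth from the ${\rm Bi}$ contribution (on the right) and non-integrable oscillatory decay (on the left), combined with the monotonicity of $p_W$.
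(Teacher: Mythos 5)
Your proof is correct, and it takes a genuinely different route from the paper's. The paper deliberately avoids working with \eqref{lin-super} directly ``due to cancelations'': it inserts the two-term asymptotic expansions of $ {\rm Ai} $, $ {\rm Bi} $ at $\pm\infty$ into $F'(z) = -G(z)$ from \eqref{G-expansion} with $\gamma = \pm 2\sqrt{\alpha\beta}$, integrates (using $C=0$), and reads off that $F(z) \to -{\rm sgn}(\beta)\infty$ as $z \to +\infty$ and $F(z) \to {\rm sgn}(\beta)\infty$ as $z \to -\infty$ whenever $\beta \neq 0$, before treating $\beta = 0$ via $F(z) = \alpha \int_z^\infty [{\rm Ai}(z')]^2 dz'$. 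You instead observe that the compatibility condition $\gamma = \pm 2\sqrt{\alpha\beta}$ turns \eqref{lin-super} into a perfect square, $F = \pm\bigl([W']^2 - zW^2\bigr)$ for a single real solution $W$ of \eqref{Airy}, and exploit the exact identity $\frac{d}{dz}\bigl([W']^2 - zW^2\bigr) = -W^2$ (which the paper's Lemma \ref{lemma-1} uses only to integrate $G$) to make $\pm F$ monotone; the entire sign analysis then collapses to the divergence of $\int W^2$ on one half-line, namely exponential growth of the ${\rm Bi}$ component at $+\infty$ when $\beta > 0$, and non-integrable oscillatory decay at $-\infty$ in the remaining cases. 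This is cleaner and more robust: it needs only leading-order amplitude information and sidesteps the cancellation bookkeeping entirely. What the paper's heavier computation buys is the precise tail rates of $F$, $F'$, $F''$, which are re-used verbatim in the proof of Lemma \ref{lemma-3} (the estimates \eqref{decay-1}, \eqref{decay-2} and the failure of $A(\rho,\cdot) \in L^2(\R)$), so its extra work is amortized later. Two small remarks: your reduction via $k \downarrow 0$ gives $F \geq 0$, but your case analysis in fact shows $F$ is unbounded below in every excluded case, so the conclusion holds for each fixed $k$ as well, matching the paper; and of your two suggested justifications of $\int_{-\infty}^0 W^2 \, dz = \infty$, keep the asymptotic one ($W = a\,{\rm Ai} + b\,{\rm Bi} \sim \pi^{-1/2}|z|^{-1/4}\sqrt{a^2+b^2}\,\sin\bigl(\tfrac{2}{3}|z|^{3/2} + \tfrac{\pi}{4} + \phi\bigr)$ with $(a,b) \neq (0,0)$), since the alternative via $p_W(z) \geq |z| W(z)^2$ plus oscillation does not close on its own: monotonicity together with that inequality only yields $W(z_n)^2 \leq L/|z_n|$ at the extrema $z_n$, which produces no contradiction unless one imports an amplitude lower bound, i.e., the same asymptotics.
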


\begin{proof}
	We shall make use the asymptotic expansion of the Airy functions, see 10.4.59-60 and 10.4.63-64 in \cite{AS1972}:
	\begin{align*}
	\left\{ \begin{array}{l}
	{\rm Ai}(z) \sim \displaystyle \frac{1}{2 \sqrt{\pi} \sqrt[4]{z}} e^{-\frac{2}{3} z^{3/2}} \left[ 1 + \mathcal{O}(z^{-3/2}) \right], \\	
	{\rm Bi}(z) \sim \displaystyle \frac{1}{\sqrt{\pi} \sqrt[4]{z}} e^{\frac{2}{3} z^{3/2}} \left[ 1 + \mathcal{O}(z^{-3/2}) \right], 
	\end{array} \right. \qquad \mbox{\rm as} \;\; z \to +\infty
	\end{align*}
	and
	\begin{align*}
	\left\{ \begin{array}{l}
	{\rm Ai}(z) \sim \displaystyle \frac{1}{\sqrt{\pi} \sqrt[4]{|z|}}  \left[ \sin\left( \frac{2}{3} |z|^{3/2} + \frac{\pi}{4}\right) + \mathcal{O}(|z|^{-3/2}) \right], \\	
	{\rm Bi}(z) \sim \displaystyle \frac{1}{\sqrt{\pi} \sqrt[4]{|z|}}  \left[ \cos\left( \frac{2}{3} |z|^{3/2} + \frac{\pi}{4}\right) + \mathcal{O}(|z|^{-3/2}) \right],
	\end{array} \right. \qquad \mbox{\rm as} \;\; z \to -\infty
	\end{align*}
Due to cancelations, it is not convenient to use the expression \eqref{lin-super} directly as $z \to \pm \infty$. Instead, we use \eqref{G-expansion} with $\gamma = \pm 2 \sqrt{\alpha \beta}$ and obtain 
\begin{align*}
F'(z) &\sim - \frac{\alpha}{4 \pi \sqrt{z}} e^{-\frac{4}{3} z^{3/2}} \left[ 1 + \mathcal{O}(z^{-3/2}) \right] - \frac{\beta}{\pi \sqrt{z}} e^{\frac{4}{3} z^{3/2}} \left[ 1 + \mathcal{O}(z^{-3/2}) \right] \\ 
& \quad \mp \frac{\sqrt{\alpha \beta}}{\pi \sqrt{z}}\left[ 1 + \mathcal{O}(z^{-3/2}) \right] \qquad \mbox{\rm as} \;\; z \to +\infty
\end{align*}
and 
\begin{align*}
F'(z) &\sim - \frac{\alpha}{2 \pi \sqrt{|z|}} \left[ 1 + \sin\left( \frac{4}{3} |z|^{3/2} \right) + \mathcal{O}(|z|^{-3/2}) \right] \\
& \quad - \frac{\beta}{2 \pi \sqrt{|z|}} \left[ 1 - \sin\left( \frac{4}{3} |z|^{3/2} \right) + \mathcal{O}(|z|^{-3/2}) \right] \\ 
& \quad \pm \frac{\sqrt{\alpha \beta}}{\pi \sqrt{|z|}}\left[ \cos\left( \frac{4}{3} |z|^{3/2} \right) + \mathcal{O}(|z|^{-3/2}) \right] \qquad \mbox{\rm as} \;\; z \to -\infty
\end{align*}
Integrating these expressions and recalling that $C = 0$ in \eqref{F-expression}, we obtain 
\begin{align*}
F(z) &\sim \frac{\alpha}{8 \pi z} e^{-\frac{4}{3} z^{3/2}} \left[ 1 + \mathcal{O}(z^{-3/2}) \right] - \frac{\beta}{2 \pi z} e^{\frac{4}{3} z^{3/2}} \left[ 1 + \mathcal{O}(z^{-3/2}) \right] \\ 
& \quad \mp \frac{2 \sqrt{\alpha \beta}}{\pi} \sqrt{z} \left[ 1 + \mathcal{O}(z^{-3/2}) \right] \qquad \mbox{\rm as} \;\; z \to +\infty
\end{align*}
and 
\begin{align*}
F(z) &\sim \frac{\alpha}{\pi} \sqrt{|z|} \left[ 1 + \mathcal{O}(|z|^{-3/2}) \right] + \frac{\beta}{\pi} \sqrt{|z|} \left[ 1 + \mathcal{O}(|z|^{-3/2}) \right] \\ 
& \quad \mp \frac{\sqrt{\alpha \beta}}{2 \pi |z|}\left[ \sin\left( \frac{4}{3} |z|^{3/2} \right) + \mathcal{O}(|z|^{-3/2}) \right] \qquad \mbox{\rm as} \;\; z \to -\infty
\end{align*}
If $\beta \neq 0$, then $F(z) \to -{\rm sgn}(\beta) \infty$ as $z \to +\infty$. 
Since $\alpha \beta \geq 0$, we also get $F(z) \to {\rm sgn}(\beta) \infty$ as $z \to -\infty$. Hence for every $k \geq 0$, $k + F(z)$ is not sign-definite 
for every $\beta \neq 0$. 

Setting $\beta = 0$, we get $F'(z) = - \alpha [{\rm Ai}(z)]^2$ and since ${\rm Ai}(z) \to 0$ as $z \to +\infty$ sufficiently fast, we can define 
\begin{equation}
\label{expression-F}
F(z) = \alpha \int_z^{\infty} [{\rm Ai}(z')]^2 dz',
\end{equation}
where the constant of integration is uniquely selected since $C = 0$ in \eqref{F-expression}. Hence, $F(z)$ is sign-definite for every $z \in \mathbb{R}$ and ${\rm sgn}(F) = {\rm sgn}(\alpha)$. We also have $F(z) \to 0$ as $z \to +\infty$ and $F(z) \to {\rm sgn}(\alpha) \infty$ as $z \to -\infty$. Hence, for every $k > 0$, $k + F(z) > 0$ for every $z \in \mathbb{R}$ if and only if $\alpha \geq 0$ in \eqref{expression-F}.
\end{proof}

Finally, we use the solution $F(z)$ in \eqref{expression-F} with $\alpha > 0$ and show that the solution $A(\rho,\cdot)$ in \eqref{Hirota-u} and (\ref{self-similar}) decay to zero at infinity, satisfies the zero-mean constraint, but is not square integrable for every $\rho > 0$.

\begin{lemma}
	\label{lemma-3}
	Let $F$ be given by \eqref{expression-F} with $\alpha > 0$ 
	and let $A$ be given by \eqref{Hirota-u} with (\ref{self-similar}). For every $\rho > 0$, we have $A(\rho,\tau) \to 0$ as $|\tau| \to \infty$,
	$
	\int_{\mathbb{R}} A(\rho,\tau) d \tau = 0,
	$
	and $A(\rho,\cdot) \notin L^2(\mathbb{R})$.
\end{lemma}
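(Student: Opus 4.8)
The plan is to work throughout from the explicit closed form of $A$ furnished by \eqref{Hirota-u} and \eqref{self-similar}. Writing $\mu := (6\rho)^{1/3}$ and $z = \tau/\mu$, so that $f(\rho,\tau) = 1 + \mu^{-1} F(z)$ and $\partial_\tau = \mu^{-1}\partial_z$, I would first record the identity
\begin{equation*}
A(\rho,\tau) = -6\,\partial_\tau^2 \log f = -6\left[ \frac{\mu^{-3} F''(z)}{1+\mu^{-1}F(z)} - \frac{\mu^{-4}\,(F'(z))^2}{(1+\mu^{-1}F(z))^2}\right].
\end{equation*}
From \eqref{expression-F} I extract the three structural facts used below: $F'(z) = -\alpha\,[{\rm Ai}(z)]^2 \le 0$ and $F''(z) = -2\alpha\,{\rm Ai}(z){\rm Ai}'(z)$, while $F$ is positive and monotone decreasing with $F(z)\to 0$ as $z\to+\infty$ and, by the asymptotics established in the proof of Lemma \ref{lemma-2}, $F(z)\sim \frac{\alpha}{\pi}\sqrt{|z|}\to+\infty$ as $z\to-\infty$. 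In particular $1+\mu^{-1}F(z)\ge 1$ for all $z$, which is the key positivity used repeatedly.

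For the first two claims I would argue directly from this formula. For the decay, the bound $|A|\le 6\mu^{-3}|F''| + 6\mu^{-4}(F')^2$ (using $1+\mu^{-1}F\ge 1$) reduces everything to the Airy asymptotics: as $z\to+\infty$ the functions ${\rm Ai}(z),{\rm Ai}'(z)$ decay exponentially, so $F',F''\to0$ and $A\to0$; as $z\to-\infty$ the numerator $F''$ stays bounded while $(F')^2=\mathcal{O}(|z|^{-1})$, and the diverging factor $1/(1+\mu^{-1}F)\to0$ forces $A\to0$. This gives $A(\rho,\tau)\to0$ as $|\tau|\to\infty$. For the zero-mean identity I would exploit that $A = -6\,\partial_\tau(\partial_\tau\log f)$ is an exact derivative, so $\int_{-R}^{S} A\,d\tau = -6\big[\partial_\tau\log f\big]_{-R}^{S}$ with $\partial_\tau\log f = \mu^{-2}F'/(1+\mu^{-1}F)$; this boundary quantity decays exponentially as $S\to+\infty$, and since its numerator is $\mathcal{O}(|z|^{-1/2})$ against a diverging denominator it also vanishes as $R\to+\infty$, whence $\int_{\R}A\,d\tau = 0$.

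The main obstacle is the failure of square integrability, which rests on a sharp asymptotic analysis as $\tau\to-\infty$. Inserting the oscillatory Airy expansions ${\rm Ai}(z){\rm Ai}'(z) = -\frac{1}{2\pi}\cos\!\big(\frac{4}{3}|z|^{3/2}\big) + \mathcal{O}(|z|^{-3/2})$ and $F(z) = \frac{\alpha}{\pi}\sqrt{|z|}\,(1+\mathcal{O}(|z|^{-3/2}))$ into the formula for $A$, the second term is $\mathcal{O}(|\tau|^{-2})$ and the first term yields
\begin{equation*}
A(\rho,\tau) = -\frac{6\,\mu^{-2}}{\sqrt{|z|}}\,\cos\!\Big(\tfrac{4}{3}|z|^{3/2}\Big) + A_{\rm err}(\rho,\tau), \qquad z = \tau/\mu,
\end{equation*}
where, tracking the subleading terms in the Airy expansions and the expansion of $(1+\mu^{-1}F)^{-1}$, the remainder obeys $A_{\rm err} = \mathcal{O}(|\tau|^{-1})$ as $\tau\to-\infty$, so $A_{\rm err}\in L^2((-\infty,\tau_0))$ for any $\tau_0<0$. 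Squaring the leading term, writing $\cos^2(\cdot) = \tfrac12 + \tfrac12\cos\!\big(\frac{8}{3}|z|^{3/2}\big)$, and passing to the variable $z$ via $d\tau = \mu\,dz$, the constant half contributes $18\mu^{-3}\int^{-z_0}_{-\infty}\frac{dz}{|z|} = +\infty$, while the rapidly oscillating half converges after a single integration by parts, since the phase derivative grows like $|z|^{1/2}$. Hence the leading term lies outside $L^2$ near $-\infty$, and by the reverse triangle inequality $\|A(\rho,\cdot)\|_{L^2}\ge \|A_{\rm lead}\|_{L^2}-\|A_{\rm err}\|_{L^2}=+\infty$, giving $A(\rho,\cdot)\notin L^2(\R)$. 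The delicate point in this last step is to carry the Airy asymptotics with honest remainder bounds so that the splitting $A = A_{\rm lead}+A_{\rm err}$ is rigorous and $A_{\rm err}$ is genuinely square integrable near $-\infty$.
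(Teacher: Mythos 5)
Your proposal is correct and takes essentially the same route as the paper's proof: the explicit logarithmic-derivative formula for $A$, the positivity of the denominator guaranteed by Lemma \ref{lemma-2}, the Airy asymptotics giving exponential decay as $\tau\to+\infty$ and the oscillatory tail $A\sim -\sqrt{6}\,(\rho|\tau|)^{-1/2}\cos\bigl(\tfrac{4}{3}|z|^{3/2}\bigr)$ as $\tau\to-\infty$, the exact-derivative boundary-term evaluation of $\partial_\tau \log f$ for the zero mean, and the logarithmic divergence of $\int |z|^{-1}\cos^2\bigl(\tfrac{4}{3}|z|^{3/2}\bigr)\,dz$ for the failure of square integrability. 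Your explicit splitting $A = A_{\rm lead} + A_{\rm err}$ with the reverse triangle inequality simply spells out the remainder bookkeeping that the paper compresses into the $\mathcal{O}(|z|^{-3/2})$ terms of \eqref{decay-1}--\eqref{decay-2}.
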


\begin{proof}
By chain rule, we have from \eqref{Hirota-u} and \eqref{self-similar} 
$$
A(\rho,\tau) = -\frac{6}{(6 \rho)^{2/3}} \partial_z^2 \log [(6 \rho)^{1/3} + F(z)],
$$
where $z = \tau/(6 \rho)^{1/3}$. Since $k + F(z) > 0$ for every $k > 0$ and $z \in \mathbb{R}$, we have 
$A(\rho,\cdot) \in L^2_{\rm loc}(\mathbb{R})$. It remains to consider square integrability of $A(\rho,\cdot)$ at infinity. 

It follows from \eqref{expression-F}, see the proof of Lemma \ref{lemma-2}, that 
\begin{align*}
F(z) \sim \frac{\alpha}{8 \pi z} e^{-\frac{4}{3} z^{3/2}} \left[ 1 + \mathcal{O}(z^{-3/2}) \right] \qquad \mbox{\rm as} \;\; z \to +\infty
\end{align*}
and 
\begin{align*}
F(z) \sim \frac{\alpha}{\pi} \sqrt{|z|} \left[ 1 + \mathcal{O}(|z|^{-3/2}) \right] \qquad \mbox{\rm as} \;\; z \to -\infty.
\end{align*}
Since $F(z), F'(z) \to 0$ as $z \to +\infty$, we have 
\begin{align}
A(\rho,\tau) &\sim - \frac{6}{\rho} F''(z) \left[ 1 + \mathcal{O}(|z|^{-3/2}) \right] \nonumber \\
&\sim 
-\frac{\alpha}{2 \pi \rho} e^{-\frac{4}{3} z^{3/2}}  \left[ 1 + \mathcal{O}(|z|^{-3/2}) \right] \qquad \mbox{\rm as} \;\; z \to +\infty,
\label{decay-1}
\end{align}
hence, $A(\rho,\cdot) \in L^2(\tau_0,\infty)$ for any $\tau_0 \gg 1$ and $\rho > 0$. However, since $F(z) \to \infty$ and $F'(z) \to 0$ as $z \to -\infty$, we have 
\begin{align}
A(\rho,\tau) &\sim -\frac{6}{(6 \rho)^{2/3}} \frac{F''(z)}{(6 \rho)^{1/3} + F(z)} \left[ 1 + \mathcal{O}(|z|^{-3/2}) \right] \nonumber \\
&\sim -\frac{\sqrt{6}}{\sqrt{\rho |\tau|}} \left[ \cos\left( \frac{4}{3} |z|^{3/2}\right)
+ \mathcal{O}(|z|^{-3/2}) \right]
\qquad \mbox{\rm as} \;\; z \to -\infty,
\label{decay-2}
\end{align}	
where we have used the expansion
\begin{align*}
F''(z) \sim \frac{\alpha}{\pi} \left[ \cos\left( \frac{4}{3} |z|^{3/2} \right) + \mathcal{O}(|z|^{-3/2}) \right] \qquad \mbox{\rm as} \;\; z \to -\infty.
\end{align*}
Hence, $A(\rho,\cdot) \notin L^2(-\infty,\tau_0)$ for any $\tau_0 \ll -1$ and $\rho > 0$. At the same time, $A(\rho,\tau) \to 0$ as $\tau \to \pm \infty$ 
and the zero-mean constraint is satisfied due to 
$$
\int_{\mathbb{R}} A(\rho,\tau) d \tau = -\frac{6}{(6 \rho)^{1/3}} \frac{F'(z)}{(6 \rho)^{1/3} + F(z)} \biggr|_{z \to -\infty}^{z \to +\infty} = 0,
$$
due to the decay of $F'(z) \to 0$ as $z \to \pm \infty$.
\end{proof}

Figure \ref{fig-1} shows a representative example of the solitary wave in the cKdV equation \eqref{cKdV-intro}, where $A$ is plotted versus $\tau$ for four values of $\rho = 1, 20, 100, 500$. The oscillatory tail behind the solitary wave ruins localization of the solitary wave in $L^2(\mathbb{R})$. Similar to \cite{Step1,Step2}, we use very large value of $\alpha$ to detach the solitary wave from the oscillatory tail. For larger values of $\rho$, the solitary wave departs even further from the oscillatory tail but its amplitude also decays to zero.

\begin{figure}[htb!]
	\centering
	\includegraphics[width=7cm,height=6cm]{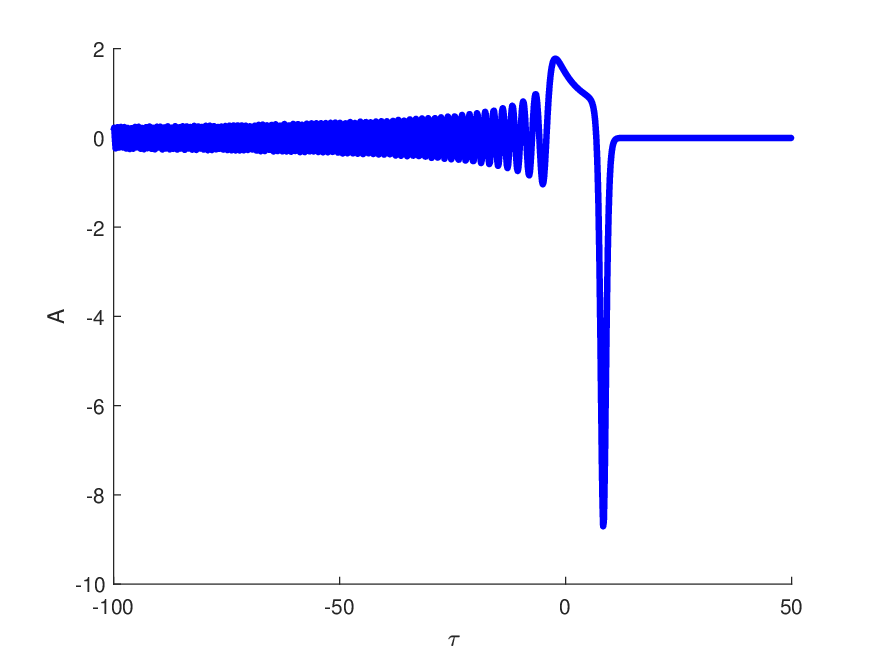}
	\includegraphics[width=7cm,height=6cm]{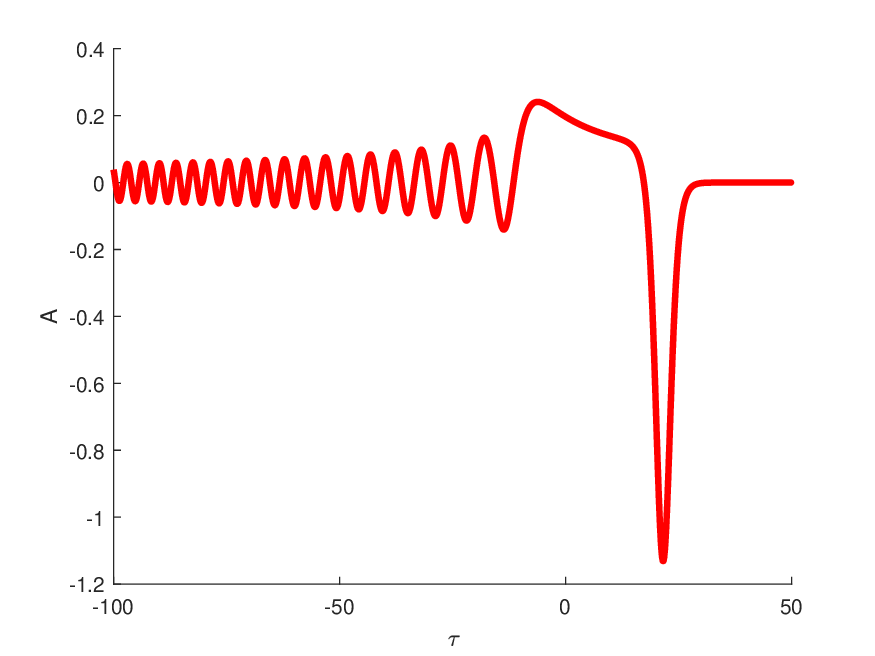} \\
	\includegraphics[width=7cm,height=6cm]{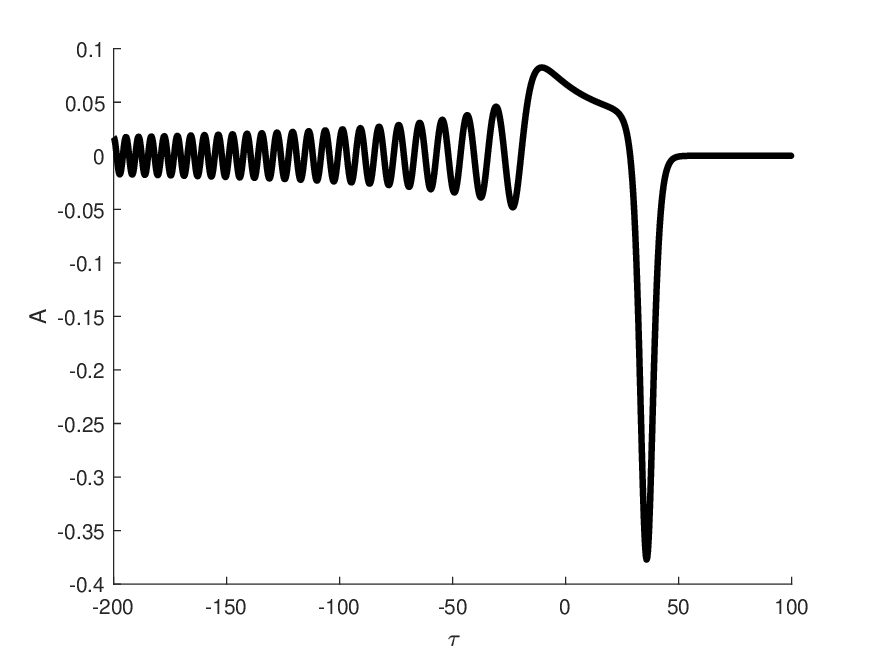}
	\includegraphics[width=7cm,height=6cm]{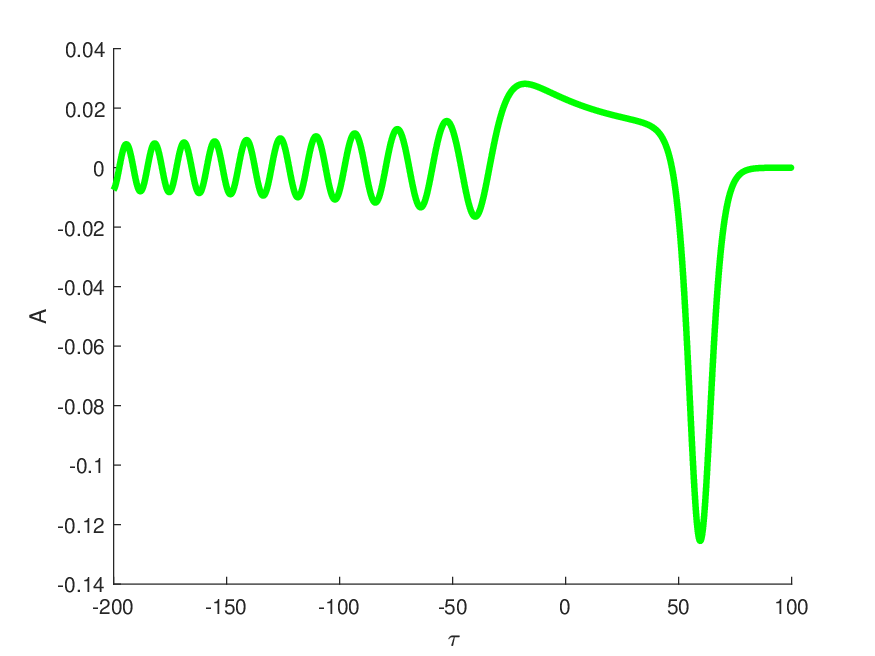}	
	\caption{The soliton solution in the form \eqref{Hirota-u} with \eqref{self-similar} and \eqref{expression-F} for $\alpha = 10^8$ versus $\tau$ for $\rho = 1$ (top left), $\rho = 20$ (top right), $\rho = 100$ (bottom left), and $\rho = 500$ (bottom right). }
	\label{fig-1}
\end{figure}

\section{Discussion}
\label{newsec4}

We have addressed here the justification of the cKdV equation 
(\ref{cKdV-intro}) in the context of the radial waves diverging from the origin in the 2D regularized Boussinesq equation (\ref{constintro}).
We have shown that the spatial dynamics and temporal dynamics formulations of (\ref{constintro}) are not well posed simultaneously. If the temporal dynamics formulation is well posed, the spatial dynamics formulation is ill posed and vice versa. We have justified the cKdV equation (\ref{cKdV-intro}) in the case of the spatial dynamics formulation (\ref{Bous-rad})--(\ref{ivp-radial}). The main result of Theorem \ref{mainthapp} relies on the existence of smooth solutions of the cKdV equation (\ref{cKdV-intro}) with the zero-mean constraint (\ref{mean-value}) in the class of functions (\ref{class-solutions}) with Sobolev exponent $s > \frac{17}{2}$. However, we have also showed in Theorem \ref{th-soliton} that the class of solitary wave solutions decaying at infinity satisfies the zero-mean constraint but fails to 
be square integrable due to the oscillatory, weakly decaying tail as $\tau \to -\infty$.

This work calls for further study of the applicability of the cKdV equation for the radial waves in nonlinear dispersive systems. We will list several open directions. 

\begin{figure}[htb!]
	\centering
	\includegraphics[width=7cm,height=6cm]{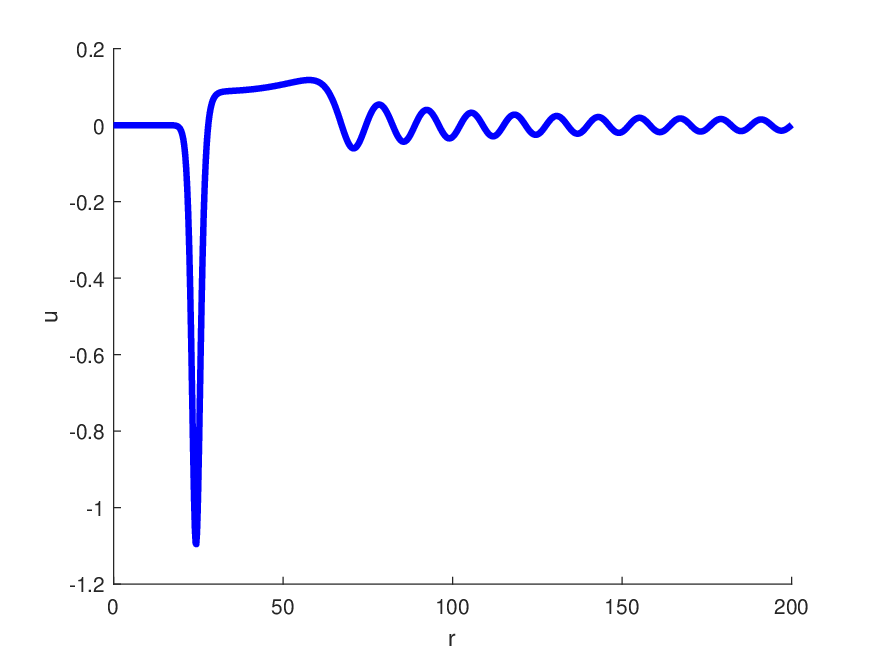}
	\includegraphics[width=7cm,height=6cm]{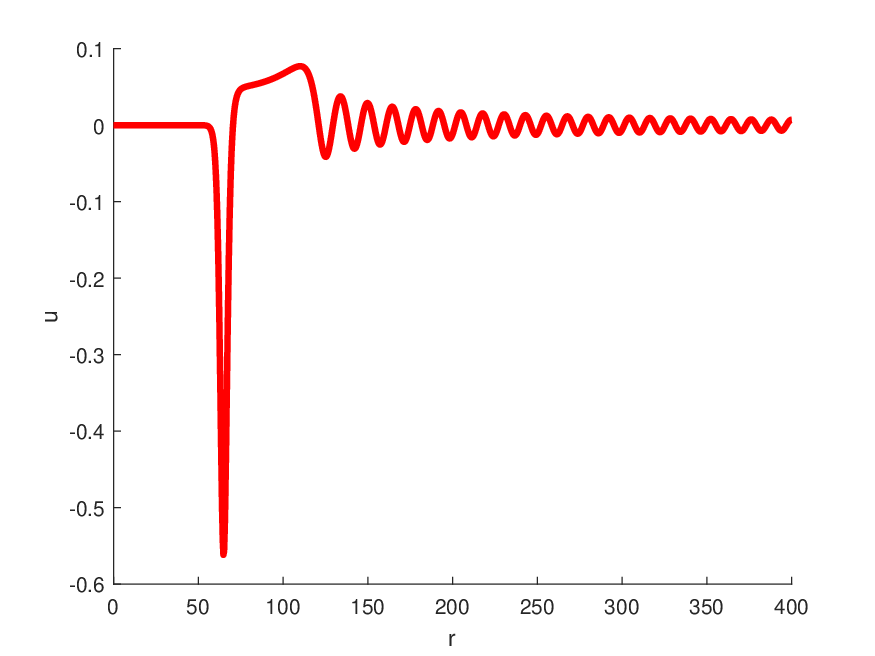}	
	\caption{The soliton solution in the form \eqref{soliton-time} 
		with \eqref{expression-F} for $\alpha = 10^8$ and $\eps = 0.1$ versus $t$ for $t = 50$ (left) and $t = 100$ (right). }
	\label{fig-2}
\end{figure}

First, the solitary waves of the cKdV equation (\ref{cKdV-intro}) can be written as the approximate solutions of the radial Boussinesq equation 
(\ref{Bous-rad}) in the form:
\begin{equation}
\label{soliton-time}
u(r,t) = -\frac{6}{(6r)^{2/3}} \left(  \frac{F''\left(\frac{t-r}{(6r)^{1/3}}\right)}{(6 r)^{1/3} \eps + F\left(\frac{t-r}{(6r)^{1/3}}\right)} - \frac{\left[ F'\left(\frac{t-r}{(6r)^{1/3}}\right) \right]^2}{\left[ (6 r)^{1/3} \eps + F\left(\frac{t-r}{(6r)^{1/3}}\right) \right]^2} \right),
\end{equation}
where $F(z)$ is given by (\ref{expression-F}) with $\alpha > 0$ and $\varepsilon > 0$ is the small parameter of asymptotic expansions. These solitary waves can be considered for fixed $t > 0$ as functions of $r$ on $(0,\infty)$, see Figure \ref{fig-2} for $\eps = 0.1$. The solitary waves decay very fast as $r \to 0$ and decay as $\mathcal{O}(r^{-1})$ as $r \to \infty$, see \eqref{decay-1} and \eqref{decay-2}. However, they are still not square integrable in the radial variable because $\int_0^{\infty} r u(r,t)^2  dr$ diverges for every $t > 0$. In addition, the cKdV equation (\ref{cKdV-intro}) is ill-posed as the temporal 
dynamics formulation from $t = 0$ to $t > 0$.

Second, it might be possible to consider the temporal formulation 
of the cKdV equation (\ref{cKdV-intro}) and to justify 
it in the framework of the temporal dynamics formulation of the Boussinesq equation (\ref{constintro}) with $\sigma = -1$. One needs to construct a stable manifold for the cKdV equation (\ref{cKdV-intro}) and to prove the error estimates on the stable manifold. The stable part of the linear semigroup for the cKdV equation (\ref{cKdV-intro}) has a decay rate of $  t^{-3} $ for $ t \to \infty $ due to $ \lambda = -|k|^{1/3} $, which could be sufficient for the construction of the stable manifold. However, one needs to combine the linear estimates with the nonlinear estimates.

Third, one can consider a well-posed 2D Boussinesq equation (\ref{constintro}) with $ \sigma = -1 $ and to handle the ill-posed  radial spatial dynamics formulation (\ref{Bous-rad})--(\ref{ivp-radial}) with the justification of the cKdV approximation as in Theorem \ref{mainthapp} by using the approach from \cite{KanoNishida,SchnICIAM}. This would involve working in spaces of functions which are analytic in a strip in the complex plane.
The oscillatory tails of the cKdV approximation, see Figure \ref{fig-2}, would now accumulate towards $r \to 0$ for the well-posed 2D Boussinesq equation, see Figure 4 in \cite{Step2}, with the rate of $\mathcal{O}(r^{-1/2})$ as $r \to 0$ which is sufficient for $\int_0^{\infty} r u(r,t)^2  dt$ to converge for every $t > 0$. 

We conclude that the most promising problem for future work is to justify the temporal formulation of the cKdV equation (\ref{cKdV-intro}) for the temporal formulation of the 2D Boussinesq equation (\ref{constintro}) with $\sigma = -1$, for which the solitary waves are admissible in the $L^2$-based function spaces. If this justification problem can be solved, one can then consider 
the transverse stability problem of cylindrical solitary waves under the azimuthal perturbations within the approximation given by the cKP equation with the exact solutions found in \cite{Step2,Step3}.

\end{document}